\documentclass[11pt,english]{article}
\usepackage[T1]{fontenc}
\usepackage[latin9]{inputenc}
\usepackage{amssymb}
\usepackage{amsmath}
\usepackage{amssymb}
\usepackage{amsthm}
\usepackage{amsfonts}
\usepackage{babel}
\usepackage{fullpage}
\usepackage{color}
\usepackage{times}
\usepackage{boites}

\hbadness=10000
\vbadness=10000
\parskip=\medskipamount
\parindent=0in

\numberwithin{equation}{section}

\newtheorem{theorem}{Theorem}[section]
\newtheorem{lem}[theorem]{Lemma}

\newtheorem{prop}[theorem]{Proposition}

\theoremstyle{definition}
\newtheorem{defn}{Definition}[section]

\theoremstyle{remark}

\newcommand{\noun}[1]{\textsc{#1}}
\newcommand{\Exp}{\operatornamewithlimits{\mathbb{E}}}
\newcommand{\Acc}{\ensuremath{\operatorname{Acc}} }
\newcommand{\Odd}{\ensuremath{\operatorname{Odd}} }
\newcommand{\eps}{\epsilon}
\newcommand{\cube}{\operatorname{\{0, 1\}}}
\newcommand{\pcube}{\operatorname{\{-1,1\}}}
\newcommand{\infl}{\operatorname{I} }
\newcommand{\NP}{\operatorname{NP}}
\newcommand{\PCP}{\operatorname{PCP}}

\usepackage{babel}

\begin{document}

\title{A Hypergraph Dictatorship Test with Perfect Completeness}

\author{Victor Chen %
\thanks{MIT CSAIL. \texttt{victor@csail.mit.edu}. Research supported in part
by an NSF graduate fellowship and NSF Award CCR-0514915.%
}}

\date{}

\maketitle
\setcounter{page}{0} 
\begin{abstract}
A hypergraph dictatorship test is first introduced by Samorodnitsky
and Trevisan in~\cite{SamTre06} and serves as a key component in
their unique games based $\PCP$ construction. Such a test has oracle
access to a collection of functions and determines whether all the
functions are the same dictatorship, or all their low degree influences
are $o(1).$ The test in~\cite{SamTre06} makes $q\geq3$ queries
and has amortized query complexity $1+O\left(\frac{\log q}{q}\right)$
but has an inherent loss of perfect completeness. In this paper we
give an adaptive hypergraph dictatorship test that achieves both
perfect completeness and amortized query complexity $1+O\left(\frac{\log q}{q}\right)$. 
\end{abstract}
\pagenumbering{arabic}

\section{Introduction}

Linearity and dictatorship testing have been studied in the past decade
both for their combinatorial interest and connection to complexity
theory. These tests distinguish functions which are linear/dictator
from those which are far from being a linear/dictator function. The
tests do so by making queries to a function at certain points and
receiving the function's values at these points. The parameters of
interest are the number of queries a test makes and the completeness
and soundness of a test.

In this paper we shall work with boolean functions of the form $f:\cube^{n}\rightarrow\pcube$.
We say a function $f$ is \emph{linear} if $f=(-1)^{\sum_{i\in S}x_{i}}$
for some subset $S\subseteq[n]$. A \emph{dictator} function is simply
a linear function where $|S|=1$, i.e., $f(x)=(-1)^{x_{i}}$ for some
$i$. A dictator function is often called a \emph{long code}, and
it is first used in~\cite{BelGolSud98} for the constructions of
probabilistic checkable proofs ($\PCP$s), see e.g.,~\cite{AroSaf,ALMSS}.
Since then, it has become standard to design a $\PCP$ system as the
composition of two verifiers, an outer verifier and an inner verifier.
In such case, a $\PCP$ system expects the proof to be written in
such a way so that the outer verifier, typically based on the verifier
obtained from Raz's Parallel Repetition Theorem~\cite{Raz98}, selects
some tables of the proof according to some distribution and then passes
the control to the inner verifier. The inner verifier, with oracle
access to these tables, makes queries into these tables and ensures
that the tables are the encoding of some error-correcting codes and
satisfy some joint constraint. The long code encoding is usually employed
in these proof constructions, and the inner verifier simply tests
whether a collection of tables (functions) are long codes satisfying
some constraints. Following this paradigm, constructing a $\PCP$
with certain parameters reduces to the problem of designing a long
code test with similar parameters.

One question of interest is the tradeoff between the soundness and
query complexity of a tester. If a tester queries the functions at
every single value, then trivially the verifier can determine all
the functions. One would like to construct a dictatorship test that
has the lowest possible soundness while making as few queries as possible.
One way to measure this tradeoff between the soundness $s$ and the
number of queries $q$ is \emph{amortized query complexity}, defined
as $\frac{q}{\log s^{-1}.}$ This investigation, initiated in~\cite{Tre98},
has since spurred a long sequence of works~\cite{SudTre98,SamTre00,HasWig01,EngHol05}.
All the testers from these works run many iterations of a single dictatorship
test by reusing queries from previous iterations. The techniques used
are Fourier analytic, and the best amortized query complexity from
this sequence of works has the form $1+O\left(\frac{1}{\sqrt{q}}\right)$.

The next breakthrough occurs when Samorodnitsky~\cite{Sam07} introduces
the notion of a \emph{relaxed} linearity test along with new ideas
from additive combinatorics. In property testing, the goal is to distinguish
objects that are very structured from those that are pseudorandom.
In the case of linearity/dictatorship testing, the structured objects
are the linear/dictator functions, and functions that are far from
being linear/dictator are interpreted as pseudorandom. The recent
paradigm in additive combinatorics is to find the right framework
of structure and pseudorandomness and analyze combinatorial objects
by dividing them into structured and pseudorandom components, see
e.g.~\cite{Tao07} for a survey. One success is the notion of Gowers
norm~\cite{Gow01}, which has been fruitful in attacking many problems
in additive combinatorics and computer science. In~\cite{Sam07},
the notion of pseudorandomness for linearity testing is relaxed; instead
of designating the functions that are far from being linear as pseudorandom,
the functions having small low degree Gowers norm are considered to
be pseudorandom. By doing so, an optimal tradeoff between soundness
and query complexity is obtained for the problem of relaxed linearity
testing. (Here the tradeoff is stronger than the tradeoff for the
traditional problem of linearity testing.)

In a similar fashion, in the $\PCP$ literature since~\cite{Has97},
the pseudorandom objects in dictatorship tests are not functions that
are far from being a dictator. The pseudorandom functions are typically
defined to be either functions that are far from all {}``juntas''
or functions whose {}``low-degree influences'' are $o(1)$. Both
considerations of a dictatorship test are sufficient to compose the
test in a $\PCP$ construction. In~\cite{SamTre06}, building on
the analysis of the relaxed linearity test in~\cite{Sam07}, Samorodnitsky
and Trevisan construct a dictatorship test (taking the view that functions
with arbitrary small {}``low-degree influences are pseudorandom)
with amortized query complexity $1+O\left(\frac{\log q}{q}\right)$.
Furthermore, the test is used as the inner verifier in a conditional
$\PCP$ construction (based on unique games~\cite{Kho02}) with the
same parameters. However, their dictatorship test suffers from an
inherent loss of perfect completeness. Ideally one would like testers
with one-sided errors. One, for aesthetic reasons, testers should
always accept valid inputs. Two, for some hardness of approximation
applications, in particular coloring problems (see e.g.~\cite{HasKho02}
or~\cite{DinMosReg06}), it is important to construct $\PCP$ systems
with one-sided errors.

In this paper, we prove the following theorem:

\begin{theorem}[main theorem] For every $q\geq3,$ there exists an
(adaptive) dictatorship test that makes $q$ queries, has completeness
$1,$ and soundness $\frac{O(q^{3})}{2^{q}};$ in particular it has
amortized query complexity $1+O\left(\frac{\log q}{q}\right)$. \end{theorem}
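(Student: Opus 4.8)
The plan is to build an adaptive $q$-query dictatorship test by combining a Samorodnitsky--Trevisan-style "hypergraph" test on $k$ functions with an adaptive querying mechanism that restores perfect completeness. First I would recall the Fourier-analytic setup: for a collection of functions $f_1,\dots,f_k:\cube^n\to\pcube$, sample points $x^{(1)},\dots,x^{(k)}$ together with enough "noise" or "consistency" points so that the total number of queries is $q$, and accept iff a certain $\pm 1$ predicate on the queried values holds. In the Samorodnitsky--Trevisan construction the predicate is essentially a product of function values over the edges of a hypergraph, which forces $1+O(\log q/q)$ amortized complexity because $k$ functions can be tested with roughly $q \approx k + \binom{k}{2}/\text{something}$ queries while the soundness drops like $2^{-q}$. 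The key structural idea for perfect completeness is to replace the "add independent noise $\eta$ and query $f(x\oplus\eta)$" step — which is what kills completeness, since a dictator can disagree with itself under noise — by an \emph{adaptive} step: query $f$ at a base point first, and then use the observed value to choose subsequent query points so that a genuine dictator is \emph{never} rejected. Concretely, one queries $f_i(x)$, and depending on the answer, queries at $x$ or at a related point so that the product predicate is satisfied identically when every $f_i=(-1)^{x_j}$.

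Next I would carry out the completeness analysis: show that when all $f_i$ are the same dictator $\chi_{\{j\}}$, the adaptive choices are engineered so the accepting predicate evaluates to the correct sign with probability exactly $1$. This should be a short combinatorial check once the adaptive rule is written down correctly. Then comes the soundness analysis, which is the heart of the paper. Here I would expand the acceptance probability in the Fourier basis of $f_1,\dots,f_k$ (handling adaptivity by conditioning on the first-round answers and writing the whole acceptance indicator as a multilinear expression in the $f_i$'s), and show that the "main term" is $2^{-q}$ times something, while every other term is controlled by a low-degree influence: if no variable has large degree-$d$ influence in any $f_i$, then all cross terms are $o(1)$, in fact bounded by $O(q^3)/2^q$. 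The technical engine for this is a Gowers-norm / hypergraph-removal-flavored inequality — the same kind used in~\cite{Sam07} and~\cite{SamTre06} — saying that a product of many function values over a random hypergraph configuration has small expectation unless some function has a large low-degree Fourier coefficient, i.e.\ a noticeable low-degree influence.

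The main obstacle, as I see it, is reconciling adaptivity with the Fourier/Gowers-norm machinery: the clean inequalities from the relaxed-linearity-testing world apply to fixed (non-adaptive) linear combinations of function evaluations, whereas here the query points in later rounds depend on earlier answers. I would handle this by fixing the randomness of the first round, expressing the conditional acceptance probability as a non-adaptive correlation of the $f_i$ with an explicit low-degree "test function," and then averaging — the point being that each conditional expression still has the structure needed to invoke the Gowers-norm bound, and the averaging over first-round randomness only costs a polynomial-in-$q$ factor. A secondary technical point is bookkeeping the exact query count: one must choose the number of base functions $k$ and the number of auxiliary/consistency queries so the total is exactly $q$ while the soundness is $O(q^3)/2^q$; this is where the $q^3$ (as opposed to, say, $q^2$) loss enters, presumably from a union bound over the $O(q)$ functions, the $O(q)$ "rounds," and an extra factor from the adaptive case analysis. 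Once these pieces are in place, setting $s=O(q^3)/2^q$ gives $q/\log s^{-1} = q/(q-3\log q-O(1)) = 1+O(\log q/q)$, which is the claimed amortized query complexity.
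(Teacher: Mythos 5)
Your plan follows essentially the same route as the paper: a Samorodnitsky--Trevisan hypergraph test in which the independent noise is replaced by an adaptively chosen perturbation that a true dictator never feels (the paper's concrete mechanism is to query $f_i(y_i)$, set $v_i=\frac{1-f_i(y_i)}{2}$, and perturb by $(v_i\vec{1}+y_i)\wedge z_i$, with folding), and a soundness analysis that conditions on the first-round answers, rewrites the conditional acceptance probability as a non-adaptive correlation (reduced to a maximal hyperedge), and invokes the Gowers linear inner-product inverse lemma of~\cite{SamTre06}, with the noise factor $2^{-|\alpha|}$ converting influence into low-degree influence. The averaging over the first round costs exactly the $2^{k}=O(q)$ factor you anticipated, giving soundness $2^{k-|E|}=O(q^{3})/2^{q}$ as claimed.
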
 

Our tester is a variant of the one given in~\cite{SamTre06}. Our
tester is adaptive in the sense that it makes its queries in two stages.
It first makes roughly $\log q$ nonadaptive queries into the function.
Based on the values of these queries, the tester then selects the
rest of the query points nonadaptively. Our analysis is based on techniques
developed in~\cite{HasWig01,SamTre06,HasKho02,GLST98}.

\subsection{Future Direction}

Unfortunately, the adaptivity of our test is a drawback. The correspondence
between $\PCP$ constructions and hardness of approximation needs
the test to be fully nonadaptive. However, a more pressing issue is
that our hypergraph dictatorship test does not immediately imply a
new $\PCP$ characterization of $\NP$. The reason is that a dictatorship
test without {}``consistency checks'' is most easily composed with
the unique label cover defined in~\cite{Kho02} as the outer verifier
in a $\PCP$ reduction. As the conjectured $\NP$-hardness of the
unique label cover cannot have perfect completeness, the obvious approach
in combining our test with the unique games-based outer verifier does
not imply a new $\PCP$ result. However, there are variants of the
unique label cover (e.g., Khot's $d$ to $1$ Conjecture)~\cite{Kho02}
that do have conjectured perfect completeness, and these variants
are used to derive hardness of coloring problems in~\cite{DinMosReg06}.
We hope that our result combined with similar techniques used in~\cite{DinMosReg06}
may obtain a new conditional $\PCP$ construction and will motivate
more progress on constraint satisfaction problems with bounded projection
.

\subsection{Related Works}

The problem of linearity testing was first introduced in~\cite{BLR}.
The framework of property testing was formally set up in~\cite{RubSud96}.
The $\PCP$ Theorems were first proved in~\cite{AroSaf,ALMSS}; dictatorship
tests first appeared in the $\PCP$ context in~\cite{BelGolSud98},
and many dictatorship tests and variants appeared throughout the $\PCP$
literature. Dictatorship test was also considered as a standalone
property testing in~\cite{ParRonSam}. As mentioned, designing testers
and $\PCP$s focusing on amortized query complexity was first investigated
in~\cite{Tre98}, and a long sequence of works~\cite{SudTre98,SamTre00,HasWig01,EngHol05}
followed. The first tester/$\PCP$ system focusing on this tradeoff
while obtaining perfect completeness was achieved in~\cite{HasKho02}.

The orthogonal question of designing testers or $\PCP$s with as few
queries as possible was also considered. In a highly influential paper~\cite{Has97},
H\aa stad constructed a $\PCP$ system making only three queries.
Many variants also followed. In particular $\PCP$ systems with perfect
completeness making three queries were also achieved in~\cite{GLST98,KhoSak06}.
Similar to our approach, O'Donnell and Wu~\cite{ODonWu09a} designed
an optimal three bit dictatorship test with perfect completeness,
and later the same authors constructed a conditional $\PCP$ system~\cite{ODonWu09b}.

\section{Preliminaries}

We fix some notation and provide the necessary background in this
section. We let $[n]$ denote the set $\{1,2,\ldots,n\}$. For a vector
$v\in\cube^{n}$, we write $|v|=\sum_{i\in[n]}v_{i}$. We let $\wedge$
denote the boolean AND, where $a\wedge b=1$ iff $a=b=1$. For vectors
$v,w\in\cube^{n}$, we write $v\wedge w$ to denote the vector obtained
by applying AND to $v$ and $w$ component-wise. We abuse notation
and sometimes interpret a vector $v\in\cube^{n}$ as a subset $v\subseteq[n]$
where $i\in v$ iff $v_{i}=1.$ For a boolean function $f:\{0,1\}^{n}\rightarrow\cube$,
we make the convenient notational change from $\cube$ to $\pcube$
and write $f:\cube^{n}\rightarrow\pcube$.

\subsection{Fourier Analysis}

\begin{defn}[Fourier transform] For a real-valued function $f:\cube^{n}\rightarrow\mathbb{R}$,
we define its Fourier transform $\widehat{f}:\cube^{n}\rightarrow\mathbb{R}$
to be \[
\widehat{f}(\alpha)=\Exp_{x\in\cube^{n}}f(x)\chi_{\alpha}(x),\]
 where $\chi_{\alpha}(x)=(-1)^{\sum_{i\in[n]}\alpha_{i}x_{i}}$. We
say $\widehat{f}(\alpha)$ is the \emph{Fourier coefficient} of $f$
at $\alpha$, and the \emph{characters} of $\cube^{n}$ are the functions
$\{\chi_{\alpha}\}_{\alpha\in\cube^{n}}$. \end{defn} 

It is easy to see that for $\alpha,\beta\in\cube^{n}$, $\Exp\chi_{\alpha}\cdot\chi_{\beta}$
is 1 if $\alpha=\beta$ and $0$ otherwise. Since there are $2^{n}$
characters, they form an orthonormal basis for functions on $\cube^{n}$,
and we have the Fourier inversion formula\[
f(x)=\sum_{\alpha\in\cube^{n}}\widehat{f}(\alpha)\chi_{\alpha}(x)\]

and Parseval's Identity\[
\sum_{\alpha\in\cube^{n}}\widehat{f}(\alpha)^{2}=\Exp_{x}[f(x)^{2}].\]

\subsection{Influence of Variables}

For a boolean function $f:\cube^{n}\rightarrow\pcube$, the \emph{influence}
of the $i$-variable, $\infl_{i}(f)$, is defined to be $\Pr_{x\in\cube^{n}}[f(x)\neq f(x+e_{i})]$,
where $e_{i}$ is a vector in $\cube^{n}$ with $1$ on the $i$-th
coordinate $0$ everywhere else. This corresponds to our intuitive
notion of influence: how likely the outcome of $f$ changes when the
$i$-th variable on a random input is flipped. For the rest of this
paper, it will be convenient to work with the Fourier analytic definition
of $\infl_{i}(f)$ instead, and we leave it to the readers to verify
that the two definitions are equivalent when $f$ is a boolean function.

\begin{defn} Let $f:\cube^{n}\rightarrow\mathbb{R}$. We define the
influence of the $i$-th variable of $f$ to be \[
\infl_{i}(f)=\sum_{\alpha\in\cube^{n}:\ \alpha_{i}=1}\enspace\hat{f}(\alpha)^{2}.\]
 \end{defn}

We shall need the following technical lemma, which is Lemma $4$ from
~\cite{SamTre06}, and it gives an upper bound on the influence of
a product of functions.

\begin{lem}[\cite{SamTre06}]\label{lem: ST influence of product}Let
$f_{1},\ldots,f_{k}:\cube^{n}\rightarrow[-1,1]$ be a collection of
$k$ bounded real-valued functions, and define $f(x)=\prod_{i=1}^{k}f_{i}(x)$
to be the product of these $k$ functions. Then for each $i\in[n],$
\[
\infl_{i}(f)\leq k\cdot\sum_{j=1}^{k}\infl_{i}(f_{j}).\]

\end{lem}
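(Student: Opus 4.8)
The plan is to unwind both the Fourier-analytic definition of influence and the Fourier expansion of the product $f = \prod_{j=1}^k f_j$. Write each $f_j = \sum_{\beta} \widehat{f_j}(\beta)\chi_\beta$. Multiplying characters just adds exponents mod $2$, i.e.\ $\chi_{\beta_1}\cdots\chi_{\beta_k} = \chi_{\beta_1 \oplus \cdots \oplus \beta_k}$, so $\widehat{f}(\alpha) = \sum_{\beta_1 \oplus \cdots \oplus \beta_k = \alpha} \prod_{j=1}^k \widehat{f_j}(\beta_j)$. Hence
\[
\infl_i(f) = \sum_{\alpha:\ \alpha_i=1} \widehat{f}(\alpha)^2 = \sum_{\alpha:\ \alpha_i = 1}\ \Bigl(\sum_{\beta_1\oplus\cdots\oplus\beta_k=\alpha}\ \prod_{j=1}^k \widehat{f_j}(\beta_j)\Bigr)^2 .
\]
The condition $\alpha_i = 1$ together with $\alpha = \beta_1 \oplus \cdots \oplus \beta_k$ forces an \emph{odd} number of the $\beta_j$ to have $(\beta_j)_i = 1$; in particular \textbf{at least one} index $j$ has $(\beta_j)_i = 1$. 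This is the combinatorial heart of the argument: a term contributing to $\infl_i(f)$ must route the $i$-th coordinate through one of the factors.

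Next I would apply Cauchy--Schwarz to the inner sum to kill the cross terms. Writing $S_\alpha$ for the set of tuples $(\beta_1,\dots,\beta_k)$ with $\bigoplus_j \beta_j = \alpha$, and using that every such tuple has some $j$ with $(\beta_j)_i=1$, one bounds $\bigl(\sum_{S_\alpha}\prod_j \widehat{f_j}(\beta_j)\bigr)^2 \le |S_\alpha| \cdot \sum_{S_\alpha}\prod_j \widehat{f_j}(\beta_j)^2$. The factor $|S_\alpha|$ is problematic as stated (it is huge), so the cleaner route is: assign to each tuple in $S_\alpha$ a ``charge index'' $j(\beta_1,\dots,\beta_k)$, the least $j$ with $(\beta_j)_i = 1$, partition $S_\alpha$ according to this index into $k$ classes, apply Cauchy--Schwarz \emph{within each class} (each class has at most $k$... no --- more carefully, after fixing the charge index $j_0$, the map $(\beta_1,\dots,\beta_k)\mapsto(\beta_1,\dots,\widehat{\beta_{j_0}},\dots,\beta_k)$ determines $\beta_{j_0}$ from $\alpha$, so each class injects into $k$-tuples; the Cauchy--Schwarz constant is the number of classes, namely $k$). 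Summing the $k$ pieces and then summing over all $\alpha$ (with $\alpha_i=1$), the constraint ``$\beta_{j_0}$ has its $i$-th bit set'' survives, and one is left with
\[
\infl_i(f) \le k \sum_{j=1}^k\ \sum_{\beta_j:\ (\beta_j)_i = 1}\ \widehat{f_j}(\beta_j)^2 \prod_{j'\neq j} \Bigl(\sum_{\beta_{j'}} \widehat{f_{j'}}(\beta_{j'})^2\Bigr) = k \sum_{j=1}^k \infl_i(f_j)\prod_{j'\neq j}\|f_{j'}\|_2^2 .
\]
Since each $f_{j'}$ is $[-1,1]$-valued, Parseval gives $\|f_{j'}\|_2^2 = \Exp[f_{j'}^2] \le 1$, and the product drops out, yielding $\infl_i(f) \le k \sum_{j=1}^k \infl_i(f_j)$.

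The step I expect to be the main obstacle is the bookkeeping in the Cauchy--Schwarz application: making precise that, after fixing which factor ``carries'' the $i$-th coordinate, the remaining freedom is exactly a free choice of $k-1$ of the $\beta_{j'}$ (with the last one determined), so that the sum of squares factors as a product of $\ell_2$-norms with one distinguished influence term. One must be careful that the Cauchy--Schwarz loss is the number of \emph{parts} ($k$ of them, one per choice of carrying index) and not the number of tuples. Everything else --- the character multiplication law, Parseval, and boundedness --- is routine. An alternative that sidesteps some of this is to induct on $k$: for $k=2$, $f = f_1 f_2$, expand $\widehat{f}(\alpha) = \sum_\gamma \widehat{f_1}(\gamma)\widehat{f_2}(\alpha\oplus\gamma)$, split according to whether $\gamma_i = 1$ or $(\alpha\oplus\gamma)_i = 1$, apply Cauchy--Schwarz with constant $2$ in each case, and use $\|f_j\|_2 \le 1$; then bootstrap to general $k$ by writing $f = f_1 \cdot (f_2\cdots f_k)$ and combining with the inductive bound on $\infl_i(f_2\cdots f_k)$, checking that the constants compose to give $k\sum_j \infl_i(f_j)$.
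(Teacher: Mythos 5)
Your convolution set-up and the partition of the tuples $(\beta_1,\dots,\beta_k)$ by the least index $j$ with $(\beta_j)_i=1$ are fine, and Cauchy--Schwarz across the $k$ classes legitimately costs only a factor $k$. The gap is the next step, where you claim the ``within-class'' cross terms can be killed so cheaply that you arrive at the displayed bound $\infl_i(f)\le k\sum_{j}\infl_i(f_j)\prod_{j'\ne j}\|f_{j'}\|_2^2$. After the outer Cauchy--Schwarz you must bound $\sum_{\alpha:\alpha_i=1}A_j(\alpha)^2$, where $A_j(\alpha)$ is the class-$j$ part of $\widehat f(\alpha)$; by Parseval this is the squared $L^2$ norm of a pointwise product of functions, and the $L^2$ norm of a product is \emph{not} bounded by the product of $L^2$ norms, nor can a Cauchy--Schwarz with loss $k$ turn the square of a class sum into a sum of squared products of Fourier coefficients (each class contains exponentially many tuples per $\alpha$, and that count is the relevant Cauchy--Schwarz constant). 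In fact your intermediate inequality is false: take $k=2$, $n=3$, and $f_1=f_2=g$ the $0/1$ indicator of the single point $(0,0,0)$. Then $f=g^2=g$, $\infl_1(f)=\tfrac1{16}$, while $2\left(\infl_1(f_1)\|f_2\|_2^2+\infl_1(f_2)\|f_1\|_2^2\right)=2\cdot2\cdot\tfrac1{16}\cdot\tfrac18=\tfrac1{32}$. (The lemma itself is safe here, since $k\sum_j\infl_1(f_j)=\tfrac14$.)

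The repair is to keep each class as a single function instead of trying to factor its norm: $A_j$ is exactly the Fourier transform of $P_j(x)=\bigl(\prod_{j'<j}\tfrac{f_{j'}(x)+f_{j'}(x+e_i)}{2}\bigr)\cdot\tfrac{f_j(x)-f_j(x+e_i)}{2}\cdot\prod_{j'>j}f_{j'}(x)$, and every factor except the middle one is bounded by $1$ in \emph{sup} norm (this is where $[-1,1]$-boundedness enters, not through $\|f_{j'}\|_2\le1$), so $\sum_{\alpha}A_j(\alpha)^2=\Exp[P_j^2]\le\Exp\bigl[\bigl(\tfrac{f_j(x)-f_j(x+e_i)}{2}\bigr)^2\bigr]=\infl_i(f_j)$; together with the factor $k$ this gives the lemma. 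Note that this repaired argument is precisely the standard proof from~\cite{SamTre06}, which the present paper quotes without reproving: use $\infl_i(g)=\Exp_x\bigl[\bigl(\tfrac{g(x)-g(x+e_i)}{2}\bigr)^2\bigr]$, telescope $f(x)-f(x+e_i)$ into $k$ terms each containing one difference $f_j(x)-f_j(x+e_i)$ and otherwise factors of absolute value at most $1$, and apply Cauchy--Schwarz once. Your fallback induction also does not deliver the stated constant as sketched: peeling off one factor at a time via the $k=2$ case compounds the loss to coefficients as large as $2^{k-1}$ (a balanced splitting would recover $k$ for $k$ a power of two), and its base case must likewise invoke $\|f_2\|_\infty\le1$ rather than $\|f_2\|_2\le1$ to avoid the same product-norm error.
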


When $\{f_{i}\}$ are boolean functions, it is easy to see that $\infl_{i}(f)\leq\sum_{j=1}^{k}\infl_{i}(f_{j})$
by the union bound.

We now define the notion of low-degree influence.

\begin{defn}Let $w$ be an integer between $0$ and $n.$ We define
the \emph{$w$-th degree influence of the $i$-th variable} of a function
$f:\cube^{n}\rightarrow\mathbb{R}$ to be \[
\infl_{i}^{\leq w}(f)=\sum_{\alpha\in\cube^{n}:\ \alpha_{i}=1,\ |\alpha|\leq w}\enspace\hat{f}(\alpha)^{2}.\]
 \end{defn}

While the definition of low-degree influence is standard in the literature,
we shall make a few remarks since this definition does not have a
clean combinatorial interpretation or an immediate justification.
Dictatorship tests (those based on influences) classify functions
in the NO instances to be those whose low-degree influences are $o(1)$
for two reasons. One is that large parity functions, which have many
variables with influence $1$ but no variables with low-degree influence,
must be rejected by the test. The second is that if $w$ is fixed,
then a bounded function has only a finite number of variables with
large $w$-th degree influence. This easy fact, though we won't need
it here, is often needed to lift a dictatorship test to a $\PCP$
construction. Both such considerations fail if we substitute the low-degree
influence requirement by just influence, thus the need for a thresholded
version of influence.

\subsection{Gowers norm}

In~\cite{Gow01}, Gowers uses analytic techniques to give a new proof
of Szeméredi's Theorem~\cite{Sze75} and in particular, initiates
the study of a new norm of a function as a measure of pseudorandomness.
Subsequently this norm is termed the \emph{Gowers uniformity norm}
and has been intensively studied and applied in additive combinatorics,
see e.g.~\cite{Tao07} for a survey. The use of the Gowers norm in
computer science is initiated in~\cite{Sam07,SamTre06}.

\begin{defn} Let $f:\cube^{n}\rightarrow\mathbb{R}$. We define the
\emph{$d$-th dimension Gowers uniformity norm} of $f$ to be \[
||f||_{U_{d}}=\left(\Exp_{x,\ x_{1},\ldots,x_{d}}\left[\thinspace\prod_{S\subseteq[d]}\thinspace f\left(x+\sum_{i\in S}x_{i}\right)\right]\right)^{1/2^{d}}.\]

For a collection of $2^{d}$ functions $f_{S}:\cube^{n}\rightarrow\mathbb{R},S\subset[d]$,
we define the \emph{$d$-th dimension Gowers inner product} of $\{f_{S}\}_{S\subseteq d}$
to be \[
\left\langle \{f_{S}\}_{S\subseteq[d]}\right\rangle _{U_{d}}=\Exp_{x,\ x_{1},\ldots,x_{d}}\thinspace\left[\prod_{S\subseteq[d]}\thinspace f_{S}\left(x+\sum_{i\in S}x_{i}\right)\right].\]
 \end{defn}

When $f$ is a boolean function, one can interpret the Gowers norm
as simply the expected number of {}``affine parallelepipeds'' of
dimension $d.$ While this expression may look cumbersome at first
glance, the use of the Gowers norm is in some sense to control expectations
over some other expressions. For instance, to count the number of
$d+1$-term progressions of the form $x,x+y,\ldots,x+d\cdot y$ in
a subset, one may be interested in approximating expressions of the
form $\Exp_{x,y}[f_{1}(x)f_{2}(x+y)\cdots f_{d}(x+d\cdot y)]$, where
$f_{1},\ldots,f_{d}$ are some bounded functions over some appropriate
domain. In fact, as shown by Gowers, these expectations are upper
bounded by the Gowers inner product of $f_{i}$, which is also upper
bounded by $\min_{i\in[d]}||f_{i}||_{U_{d}}^{2^{d}}$. Thus, in a
rough sense, questions regarding progressions are then reduced to
questions regarding the Gowers norms, which are more amenable to analytic
techniques.

The proof showing that $\Exp_{x,y}[f_{1}(x)f_{2}(x+y)\cdots f_{d}(x+d\cdot y)]$
is upper bounded by the minimum Gowers norm of all the functions $f_{i}$
is not difficult; it proceeds by repeated applications of the Cauchy-Schwarz
inequality and substitution of variables. Collectively, statements
saying that certain expressions are governed by the Gowers norm are
coined \emph{von-Neumann type theorems} in the literature.

For the analysis of hypergraph-based dictatorship test, we shall encounter
the following expression.

\begin{defn}Let $\{f_{S}\}_{S\subseteq[d]}$ be a collection of functions
where $f_{S}:\cube^{n}\rightarrow\mathbb{R}$. We define the \emph{$d$-th
dimension Gowers linear inner product} of $\{f_{S}\}$ to be \[
\left\langle \{f_{S}\}_{S\subseteq[d]}\right\rangle _{LU_{d}}=\Exp_{x_{1},\ldots,x_{d}}\thinspace\left[\prod_{S\subseteq[d]}\thinspace f_{S}\left(\sum_{i\in S}x_{i}\right)\right].\]
 \end{defn}

This definition is a variant of the Gowers inner product and is in
fact upper bounded by the square root of the Gowers inner product
as shown in~\cite{SamTre06}. Furthermore they showed that if a collection
of functions has large Gowers inner product, then two functions must
share an influential variable. Thus, one can infer the weaker statement
that large linear Gowers inner product implies two functions have
an influential variable.

For our purposes, we can encapsulate all the prior discussion into
the following statement, which is Lemma $16$ from~\cite{SamTre06}.
This is the only fact on the Gowers norm that we explicitly need.

\begin{lem}[\cite{SamTre06}]\label{lem:ST gowers inverse} Let $\{f_{S}\}_{S\subseteq[d]}$
be a collection of bounded functions of the form $f_{S}:\cube^{n}\rightarrow[-1,1].$
Suppose $\left\langle \{f_{S}\}_{S\subseteq[d]}\right\rangle _{LU_{d}}\geq\epsilon$
and $\Exp f_{[d]}=0$. Then there exists some variable $i$, some
subsets $S\neq T\subseteq[d]$ such that the influences of the $i$-th
variable in both $f_{S}$ and $f_{T}$ are at least $\frac{\eps^{4}}{2^{O(d)}}.$
\end{lem}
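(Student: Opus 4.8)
The plan is to argue in two stages: first pass from the linear Gowers inner product to the ordinary (affine) Gowers inner product by a Cauchy--Schwarz step, and then prove an inverse theorem for the affine inner product by a Fourier computation, using $\Exp f_{[d]}=0$ to exclude the trivial frequency and a dyadic pigeonholing step to produce the shared influential coordinate.

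\emph{Stage one.} Split the product defining $\langle\{f_S\}_{S\subseteq[d]}\rangle_{LU_d}$ according to whether $d\in S$; writing $a_{S'}:=\sum_{i\in S'}x_i$ for $S'\subseteq[d-1]$ it equals $\left(\prod_{S'\subseteq[d-1]}f_{S'}(a_{S'})\right)\left(\prod_{S'\subseteq[d-1]}f_{S'\cup\{d\}}(x_d+a_{S'})\right)$. Pulling the expectation over $x_1,\dots,x_{d-1}$ outside, bounding the first factor by $1$, applying Cauchy--Schwarz, and then using $\left(\Exp_{x_d}F(x_d)\right)^{2}=\Exp_{x_d,x_d'}F(x_d)F(x_d')$ together with the substitution $(x_d,x_d')\mapsto(x_d,\,x_d+x_d')$, one gets
\[
\langle\{f_S\}_{S\subseteq[d]}\rangle_{LU_d}^{2}\ \le\ \langle\{f_{S\cup\{d\}}\}_{S\subseteq[d]}\rangle_{U_d}.
\]
The collection on the right is $\{h_S:=f_{S\cup\{d\}}\}_{S\subseteq[d]}$; it satisfies $h_{[d]}=f_{[d]}$ so $\Exp h_{[d]}=0$, and since $h_{S'}=h_{S'\cup\{d\}}$ for $S'\subseteq[d-1]$ one may also rewrite the right side as $\Exp_{x_d}\langle\{\Delta_{x_d}g_{S'}\}_{S'\subseteq[d-1]}\rangle_{U_{d-1}}$, with $g_{S'}=f_{S'\cup\{d\}}$ and $\Delta_{x_d}g_{S'}(z)=g_{S'}(z)\,g_{S'}(z+x_d)$.

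\emph{Stage two.} The goal is: if the $g_S$ are bounded, $\langle\{g_S\}_{S\subseteq[d]}\rangle_{U_d}\ge\delta$ and $\Exp g_{[d]}=0$, then some variable is influential in two of the $g_S$. Expanding each $g_S$ in the Fourier basis and evaluating the expectations over all the variables gives
\[
\langle\{g_S\}_{S\subseteq[d]}\rangle_{U_d}\ =\ \sum_{(\alpha_S)\in W}\ \prod_{S\subseteq[d]}\widehat{g_S}(\alpha_S),
\]
where $W$ is the coordinate-wise $\mathbb{F}_{2}$-linear space of consistent frequency tuples, namely those with $\bigoplus_{S}\alpha_S=0$ and $\bigoplus_{S\ni i}\alpha_S=0$ for every $i$. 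Because $\widehat{g_{[d]}}(0)=0$, every term with $\alpha_{[d]}=0$ vanishes, so the sum runs only over tuples with $\alpha_{[d]}\ne0$. Now apply Cauchy--Schwarz repeatedly, collapsing the free coordinates of $W$ one at a time and pairing each $g_S$ with a copy of itself, so that every partial sum has the form $\sum_{\alpha}\widehat{g_S}(\alpha)^{2}\le 1$ (Parseval) and may be dropped harmlessly, until one is left with an inequality of the shape $\sum_{\alpha\ne0}\widehat{g_S}(\alpha)^{2}\,\widehat{g_T}(\alpha)^{2}\ \ge\ \delta^{O(1)}/2^{O(d)}$ for two distinct $S,T\subseteq[d]$, one of which is $[d]$ (this is what keeps the frequency nonzero). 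A dyadic/pigeonhole step then finishes: $g_T$ has at most $O(\delta^{-1})$ heavy Fourier coefficients, so some $\alpha^{*}\ne0$ is heavy for both $g_S$ and $g_T$, whence any $i$ with $\alpha^{*}_i=1$ has $\infl_{i}(g_S),\infl_{i}(g_T)\ge\delta^{O(1)}/2^{O(d)}$. Taking $\delta=\eps^{2}$ from stage one, and invoking Lemma~\ref{lem: ST influence of product} to push influences from the products $\Delta_{x_d}g_{S'}$ (and from whatever products the Cauchy--Schwarz pairings create) back onto the original $f_S$ at the cost of a further $2^{O(d)}$ factor, gives influences at least $\eps^{4}/2^{O(d)}$ in two functions $f_{S}\ne f_{T}$.

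The main obstacle is the bookkeeping in stage two. The hypothesis $\Exp g_{[d]}=0$ concerns a single function, whereas the Cauchy--Schwarz manipulations are multilinear and tend to mix the functions; one must order the collapses of the coordinates of $W$ so that the distinguished coordinate $[d]$ --- the one pinned nonzero --- remains attached to a single function until the very end, and so that the translations that inevitably appear when a function is paired with a shift of itself stay controlled. The second delicate point is quantitative: a naive induction on $d$ would square $\delta$ at every level and produce a useless $\eps^{2^{d}}$, so the argument must be arranged so that each Cauchy--Schwarz is loss-free in $\delta$ up to a $2^{O(d)}$ factor, with the only genuine polynomial loss coming from the single final dyadic step.
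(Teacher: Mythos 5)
First, note that the paper does not prove this lemma at all: it is imported verbatim from~\cite{SamTre06} ({}``the only fact on the Gowers norm that we explicitly need''), so the comparison is really against the cited proof. Your stage one is correct and matches the known route: splitting according to whether $d\in S$, bounding the $d\notin S$ factors by $1$, and applying Cauchy--Schwarz does give $\langle\{f_{S}\}\rangle_{LU_{d}}^{2}\leq\langle\{h_{S}\}\rangle_{U_{d}}$ with $h_{S}=f_{(S\cap[d-1])\cup\{d\}}$ and $\Exp h_{[d]}=\Exp f_{[d]}=0$, which is exactly the {}``square root of the Gowers inner product'' remark in the paper. The genuine gap is in stage two. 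The terminal inequality you claim --- that the Cauchy--Schwarz collapses over the constraint space $W$ leave you with $\sum_{\alpha\neq0}\widehat{g_{S}}(\alpha)^{2}\widehat{g_{T}}(\alpha)^{2}\geq\delta^{O(1)}/2^{O(d)}$ for two of the \emph{original} functions --- is false once $d\geq3$. Take every $g_{S}$ equal to $g(x,y)=(-1)^{x_{1}x_{2}+x_{3}x_{4}+\cdots+x_{n-1}x_{n}+y}$ on $\cube^{n+1}$: then $\Exp g_{[d]}=0$, the $U_{3}$ inner product equals $1$ (quadratic and linear phases have $U_{3}$ norm $1$), yet every Fourier coefficient of $g$ has square $2^{-n}$, so $\sum_{\alpha\neq0}\widehat{g_{S}}(\alpha)^{2}\widehat{g_{T}}(\alpha)^{2}\leq2^{-n}$ for every pair. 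The conclusion of the lemma still holds there (every variable $x_{i}$ has influence $1/2$), but for reasons invisible at the level of individual Fourier coefficients of the $g_{S}$. Any correct execution of the repeated Cauchy--Schwarz produces statements about derived functions --- products of shifted copies, your $\Delta_{x_{d}}g_{S'}$ and their iterates --- and the passage back to the original functions must go through Lemma~\ref{lem: ST influence of product}; that is precisely the content of the argument in~\cite{SamTre06}, and it is exactly the part your write-up defers ({}``whatever products the Cauchy--Schwarz pairings create,'' the ordering of the collapses, why the zero-mean frequency stays pinned, why the loss is $2^{O(d)}$ rather than $\delta^{2^{d}}$). So the hard part is missing, and the stated waypoint it is supposed to reach is not true.

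A second, smaller gap sits at the interface of the two stages: the reduced family $\{h_{S}\}$ discards every $f_{S}$ with $d\notin S$ and contains each surviving function twice, namely $h_{S'}=h_{S'\cup\{d\}}=f_{S'\cup\{d\}}$. A black-box inverse theorem for $\langle\cdot\rangle_{U_{d}}$ returns two indices $S\neq T\subseteq[d]$, but if $T=S\cup\{d\}$ these name the \emph{same} original function, and you have not produced the two distinct subsets that the lemma (and its use in Lemma~\ref{lem:H-Test soundness}, where $S\neq T$ is converted into $a\neq b$) requires. You would need either a strengthened inverse statement (e.g.\ that the two sets differ inside $[d-1]$, or that one of them is $[d]$ and the other is not $[d-1]$) or a different reduction; as written, the final sentence {}``gives influences in two functions $f_{S}\neq f_{T}$'' does not follow from what precedes it.
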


\section{Dictatorship Test}

\begin{defn}[dictatorship] For $i\in[n]$, the \emph{$i$-th dictator}
is the function $f(x)=(-1)^{x_{i}}$. \end{defn} In the $\PCP$ literature,
the $i$--th dictator is also known as the long code encoding of $i$,
$\left\langle (-1)^{x_{i}}\right\rangle _{x\in\cube^{n}}$, which
is simply the evaluation of the $i$-th dictator function at all points.

Now let us define a $t$-function dictatorship test. Suppose we are
given oracle access to a collection of boolean functions $f_{1},\ldots,f_{t}$.
We want to make as few queries as possible into these functions to
decide if all the functions are the same dictatorship, or no two functions
have some common structure. More precisely, we have the following
definition:

\begin{defn} We say that a test $T=T^{f_{1},\ldots,f_{t}}$ is a
\emph{$t$--function dictatorship test} with \emph{completeness} $c$
and \emph{soundness} $s$ if $T$ is given oracle access to a family
of $t$ functions $f_{1},\ldots,f_{t}:\cube^{n}\rightarrow\pcube$,
such that \end{defn} 
\begin{itemize}
\item if there exists some variable $i\in[n]$ such that for all $a\in[t],$
$f_{a}(x)=(-1)^{x_{i}}$, then $T$ accepts with probability at least
$c$, and 
\item for every $\epsilon>0$, there exist a positive constant $\tau>0$
and a fixed positive integer $w$ such that if $T$ accepts with probability
at least $s+\epsilon$, then there exist two functions $f_{a},f_{b}$
where $a,b\in[t],a\neq b$ and some variable $i\in[n]$ such that
$\infl_{i}^{\leq w}(f_{a}),\infl_{i}^{\leq w}(f_{b})\geq\tau$. 
\end{itemize}
A $q$-function dictatorship test making $q$ queries, with soundness
$\frac{q+1}{2^{q}}$ was proved in~\cite{SamTre06}, but the test
suffers from imperfect completeness. We obtain a $\left(q-O(\log q)\right)$--dictatorship
test that makes $q$ queries, has completeness $1$, soundness $\frac{O(q^{3})}{2^{q}}$,
and in particular has amortized query complexity $1+O\left(\frac{\log q}{q}\right)$,
the same as the test in~\cite{SamTre06}. By a simple change of variable,
we can more precisely state the following:

\begin{theorem}[main theorem restated] For infinitely many $t$,
there exists an adaptive $t$-function dictatorship test that makes
$t+\log(t+1)$ queries, has completeness $1$, and soundness $\frac{(t+1)^{2}}{2^{t}}$.
\end{theorem}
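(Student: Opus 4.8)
The plan is to build the test around the hypergraph/Gowers framework of~\cite{SamTre06}, but to repair perfect completeness using an adaptive "folding"-style trick in the spirit of the three-query tests of~\cite{GLST98,HasKho02}. Concretely, I would take $d$ with $2^d = t+1$, so the test is indexed by the nonempty subsets $S \subseteq [d]$ (giving $t$ functions $f_S$), and query each $f_S$ at a point of the form $\sum_{i\in S} x_i$ perturbed by independent noise, exactly as in the linear Gowers inner product $\langle \{f_S\}\rangle_{LU_d}$. The extra $\log(t+1) = d$ queries are the adaptive part: first query the "base" directions, e.g.\ read off $d$ bits $b_1,\dots,b_d$ by querying one designated function (or the dictator-consistent combination) at $x_1,\dots,x_d$, and then use these $d$ bits to decide, for each $S$, whether to accept "$f_S$-query $=\prod_{i\in S}(\text{bit}_i)$" or its negation. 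The point of reading the bits first is that it lets the acceptance predicate be satisfiable with probability $1$ by a genuine dictator while still being a $\{-1,1\}$-balanced (mean-zero) predicate in the remaining queried bits, which is what Lemma~\ref{lem:ST gowers inverse} needs.

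First I would verify completeness: if every $f_a = (-1)^{x_i}$ for a common $i$, then every queried value is determined by the $i$-th coordinate of the query point, the adaptively read bits $b_j$ are exactly the $i$-th coordinates of the $x_j$, and by construction the acceptance predicate is precisely the linear identity that these values satisfy; so the test accepts with probability exactly $1$. This is where the adaptivity earns its keep — without pre-reading the bits, a nonadaptive predicate forcing mean-zero behaviour would reject the dictator with positive probability, which is the inherent loss in~\cite{SamTre06}.

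Next, soundness. I would write the acceptance probability as an expectation of a product of $\pm 1$ indicators over the $2^q$ query outcomes, expand the AND/arithmetization of the predicate into a sum of $2^{O(q)}$ Fourier-type product terms, and isolate the terms. The "trivial" terms (empty or low-complexity) contribute the main term $\frac{(t+1)^2}{2^t}$ (roughly: the probability the random predicate is satisfied by chance), and every remaining term is of the form $\langle \{g_S\}_{S\subseteq[d]}\rangle_{LU_d}$ for suitable bounded functions $g_S$ built from the $f_S$ (products of $f_S$ with characters coming from the noise and the bit-reading queries), with $\Exp g_{[d]}=0$ forced by the mean-zero design. If the test accepts with probability $s+\epsilon$, some such nontrivial term is at least $\epsilon/2^{O(q)}$, and Lemma~\ref{lem:ST gowers inverse} then yields a variable $i$ and two sets $S\neq T$ with $\infl_i(g_S),\infl_i(g_T)$ bounded below; Lemma~\ref{lem: ST influence of product} converts influence of the product $g_S$ back to a lower bound on $\infl_i^{\leq w}(f_a)$ for one of the original functions (truncating at degree $w = w(\epsilon,q)$ using that the noise operator decays high-degree Fourier mass), and similarly for $f_b$, giving the required conclusion with $\tau = \tau(\epsilon,q)$ and fixed $w$.

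The main obstacle I expect is the bookkeeping in the Fourier expansion of the adaptive predicate: because the acceptance condition depends on the $d$ pre-read bits, the arithmetization is not a single clean product but a case analysis over the $2^d$ bit patterns, and one must check that in every case the surviving cross-terms genuinely have the linear-Gowers form with a mean-zero top function, and that the count of "trivial" terms is exactly what gives soundness $\frac{(t+1)^2}{2^t}$ rather than something larger. Getting the constant in the exponent to land at $q^3/2^q$ (equivalently $(t+1)^2/2^t$) — as opposed to a worse polynomial factor — will require carefully choosing the noise rate and the arithmetization so that the dominant error term is the "random predicate" term and not an accumulation of many small Gowers terms; this is the delicate quantitative heart of the argument, and it is where I would spend most of the effort.
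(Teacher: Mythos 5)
There is a genuine gap, and it sits exactly at the heart of what the paper contributes: the mechanism by which noise is simulated without losing perfect completeness. Your scaffolding is the same as the paper's (functions indexed by nonempty subsets of $[d]$ with $2^{d}=t+1$, query points $\sum_{i\in S}x_{i}$, expansion into linear Gowers inner products, the inverse Lemma~\ref{lem:ST gowers inverse}, the product-influence Lemma~\ref{lem: ST influence of product}, and noise-decay of Fourier mass to pass to low-degree influence). But your adaptive step --- read bits $b_{1},\dots,b_{d}$ by querying a designated function at the base points $x_{1},\dots,x_{d}$ and then use those bits to possibly negate the acceptance predicate --- does not resolve the tension between noise and completeness. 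If the query points carry genuine independent noise $\eta_{S}$, a common dictator $(-1)^{x_{\ell}}$ evaluated at $\sum_{i\in S}x_{i}+\eta_{S}$ flips whenever $\eta_{S}(\ell)=1$; knowing $b_{j}=x_{j}(\ell)$ gives the verifier no information about whether the noise hit coordinate $\ell$, so no sign rule based on the $b_{j}$ can restore acceptance with probability $1$. If instead you drop the noise, completeness is fine but soundness fails: all functions equal to one large parity $\chi_{\alpha}$ pass with probability $1$, yet have zero $w$-degree influence for $w<|\alpha|$, so the conclusion of the dictatorship test is false. (Relatedly, the mean-zero condition $\Exp f_{[d]}=0$ needed for Lemma~\ref{lem:ST gowers inverse} comes from folding, not from balancing the predicate; and the imperfect completeness in the Samorodnitsky--Trevisan test is caused by the noise, not by mean-zero-ness.)

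The paper's fix, which is the missing idea, is different in kind: for each vertex $i$ the test queries $f_{i}$ at a \emph{fresh} random point $y_{i}$, sets $v_{i}=\frac{1-f_{i}(y_{i})}{2}$, and then perturbs the second-pass query points by masks of the form $(v_{i}\vec{1}+y_{i})\wedge z_{i}$ (and $\left(\sum_{i\in e}(v_{i}\vec{1}+y_{i})\right)\wedge z_{e}$). For a dictator at coordinate $\ell$ one has $v_{i}+y_{i}(\ell)=0$, so every mask vanishes at coordinate $\ell$ and the test accepts with probability exactly $1$; at the same time, for the soundness analysis these masks behave like quarter-noise, which is what Proposition~\ref{pro:noisy function} quantifies via the factor $4^{-|\alpha|}$, and this is precisely what lets one truncate to low-degree influence and also what rules out large parities. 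Your soundness sketch (isolate a nontrivial term of size $\eps/2^{O(q)}$, apply the inverse lemma, then the product-influence lemma, then noise decay) matches the paper's once this mask construction is in place, and your account of where $\frac{(t+1)^{2}}{2^{t}}$ comes from is about right ($2^{k}$ loss from the first-pass queries times $2^{-|E|}$ from the expansion), but as written the construction itself would not be both perfectly complete and sound.
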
 

Our test is adaptive and selects queries in two passes. During the
first pass, it picks an arbitrary subset of $\log(t+1)$ functions
out of the $t$ functions. For each function selected, our test picks
a random entry $y$ and queries the function at entry $y$. Then based
on the values of these $\log(t+1)$ queries, during the second pass,
the test selects $t$ positions nonadaptively, one from each function,
then queries all $t$ positions at once. The adaptivity is necessary
in our analysis, and it is unclear if one can prove an analogous result
with only one pass.

\subsection{Folding}

As introduced by Bellare, Goldreich, and Sudan~\cite{BelGolSud98},
we shall assume that the functions are {}``folded'' as only half
of the entries of a function are accessed. We require our dictatorship
test to make queries in a special manner. Suppose the test wants to
query $f$ at the point $x\in\cube^{n}$. If $x_{1}=1$, then the
test queries $f(x)$ as usual. If $x_{1}=0$, then the test queries
$f$ at the point $\vec{1}+x=(1,1+x_{2},\ldots,1+x_{n})$ and negates
the value it receives. It is instructive to note that folding ensures
$f(\vec{1}+x)=-f(x)$ and $\Exp f=0$.

\subsection{Basic Test}

For ease of exposition, we first consider the following simplistic
scenario. Suppose we have oracle access to just one boolean function.
Furthermore we ignore the tradeoff between soundness and query complexity.
We simply want a dictatorship test that has completeness $1$ and
soundness $\frac{1}{2}$. There are many such tests in the literature;
however, we need a suitable one which our hypergraph dictatorship
test can base on. Our basic test below is a close variant of the one
proposed by Guruswami, Lewin, Sudan, and Trevisan~\cite{GLST98}.

 \begin{breakbox} \noun{Basic Test $T$}: with oracle access
to $f$, 
\begin{enumerate}
\item Pick $x_{i},x_{j},y,z$ uniformly at random from $\cube^{n}$. 
\item Query $f(y)$. 
\item Let $v=\frac{1-f(y)}{2}.$ Accept iff \[
f(x_{i})f(x_{j})=f(x_{i}+x_{j}+(v\vec{1}+y)\wedge z).\]

\end{enumerate}
\end{breakbox}

\begin{lem} The test $T$ is a dictatorship test with completeness
$1.$ \end{lem}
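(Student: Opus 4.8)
The plan is to check the two clauses of the dictatorship-test definition for a single function ($t=1$): perfect completeness, and soundness $\tfrac12$ (the value anticipated by the surrounding discussion).

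\emph{Completeness.} Suppose $f(x)=(-1)^{x_k}$ is the $k$-th dictator. Dictators are consistent with folding, since $(-1)^{(\vec 1+x)_k}=-(-1)^{x_k}$, so the folded oracle returns the true value of $f$ on every query and one may argue directly with $f$. The first-stage query returns $f(y)=(-1)^{y_k}$, hence $v=\tfrac{1-f(y)}{2}=y_k$; therefore the $k$-th coordinate of $v\vec 1+y$ is $y_k+y_k=0$ in $\cube$, and the $k$-th coordinate of $(v\vec 1+y)\wedge z$ is $0$ whatever $z$ is. Consequently $f\big(x_i+x_j+(v\vec 1+y)\wedge z\big)=(-1)^{x_{i,k}+x_{j,k}}=f(x_i)f(x_j)$, so the test accepts with probability $1$. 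It is exactly the extra summand $v\vec 1$ that makes this work — it cancels the $k$-th bit of $y$ — which is the modification of the single-function test of~\cite{GLST98} needed to obtain one-sided error.

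\emph{Soundness.} Since $f$ is $\pcube$-valued, $\Pr[\text{accept}]=\tfrac12+\tfrac12 A$, where $A=\Exp_{x_i,x_j,y,z}\big[f(x_i)f(x_j)f(x_i+x_j+(v\vec 1+y)\wedge z)\big]$. I would condition on $y$, which determines both $v$ and the vector $u:=v\vec 1+y$; as $z$ ranges uniformly over $\cube^n$, $u\wedge z$ is a uniformly random subset of $u$. Expanding the three copies of $f$ in the Fourier basis and taking the $x_i$- and $x_j$-averages collapses all three characters to a common $\gamma$, and since $\Exp_z\chi_\gamma(u\wedge z)=1$ when $\gamma\cap u=\emptyset$ and $0$ otherwise, one is left with $A=\sum_{\gamma}\widehat f(\gamma)^3 p_\gamma$ where $p_\gamma:=\Pr_y[\,\gamma\cap(v\vec 1+y)=\emptyset\,]$. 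Folding gives $\widehat f(\emptyset)=\Exp f=0$, so the $\gamma=\emptyset$ term drops. The only elementary estimate required is $p_\gamma\le 2^{1-|\gamma|}$ for $\gamma\neq\emptyset$: the event $\gamma\cap(v\vec 1+y)=\emptyset$ says precisely that $y$ restricted to $\gamma$ is the constant string with value $v$, which for each fixed $v\in\cube$ occurs with probability $2^{-|\gamma|}$, so a union bound over $v\in\{0,1\}$ gives it.

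\emph{Finishing and the main point.} Split $A=\sum_{\gamma\neq\emptyset}\widehat f(\gamma)^3 p_\gamma$ at degree $w$. For $1\le|\gamma|\le w$, bound $p_\gamma\le 1$ and use Parseval ($\sum_\gamma\widehat f(\gamma)^2=1$) to bound that part by $\max_{1\le|\gamma|\le w}|\widehat f(\gamma)|$; for $|\gamma|>w$, bound $p_\gamma\le 2^{-w}$ and again use Parseval to bound that part by $2^{-w}$. Hence if $\Pr[\text{accept}]\ge\tfrac12+\eps$ then $\max_{1\le|\gamma|\le w}|\widehat f(\gamma)|\ge 2\eps-2^{-w}$, and taking $w=\lceil\log_2(1/\eps)\rceil$ produces some $\gamma$ with $1\le|\gamma|\le w$ and $\widehat f(\gamma)^2\ge\eps^2$; any $i\in\gamma$ then satisfies $\infl_i^{\le w}(f)\ge\widehat f(\gamma)^2\ge\eps^2$, so $\tau=\eps^2$ and this $w$ witness soundness $\tfrac12$. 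I do not expect a real obstacle: the only thing needing care is organizing the Fourier expansion around the adaptive choice of $v$, and once one conditions on $y$ the rest is a routine GLST-type calculation whose two load-bearing facts are the folding identity $\widehat f(\emptyset)=0$ and the geometric decay $p_\gamma\le 2^{1-|\gamma|}$.
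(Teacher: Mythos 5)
Your completeness argument is correct and is essentially the paper's own proof of this lemma: the key point in both is that $v+y_\ell=0$ for a dictator $f(x)=(-1)^{x_\ell}$, so the $\ell$-th coordinate of $(v\vec 1+y)\wedge z$ vanishes and linearity gives acceptance with probability $1$ (the paper's proof of this particular lemma contains only this completeness check; soundness is deferred to a separate lemma). Your additional soundness argument is also correct but takes a slightly different route from the paper: the paper first derives the exact expression $p=\tfrac12+\tfrac12\sum_\alpha \widehat f(\alpha)^3 2^{-|\alpha|}\bigl(1+\sum_{\beta\subseteq\alpha}\widehat f(\beta)\bigr)$ (Proposition 3.3) and then applies Cauchy--Schwarz and Parseval, whereas you condition on $y$, note $\Exp_z\chi_\gamma(u\wedge z)=1_{\{\gamma\cap u=\emptyset\}}$, and replace the inner sum by the direct probabilistic bound $p_\gamma=\Pr_y[\gamma\cap(v\vec 1+y)=\emptyset]\le 2^{1-|\gamma|}$; the two computations agree because folding forces $\widehat f(\beta)=0$ for even $|\beta|$, and your version avoids Cauchy--Schwarz at the cost of not recording the exact acceptance probability, which the paper reuses as a sanity check and as the template for the hypergraph analysis.
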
 \begin{proof} Suppose $f$ is the $\ell$-th dictator,
i.e., $f(x)=(-1)^{x_{\ell}}.$ First note that \[
v+y_{\ell}=\frac{1-(-1)^{y_{\ell}}}{2}+y_{\ell},\]
 which evaluates to $0.$ Thus by linearity of $f$ \begin{eqnarray*}
f(x_{i}+x_{j}+(v\vec{1}+y)\wedge z) & = & f(x_{i})f(x_{j})f((v\vec{1}+y)\wedge z)\\
 & = & f(x_{i})f(x_{j})(-1)^{(v+y_{\ell})\wedge z_{\ell}}\\
 & = & f(x_{i})f(x_{j})\end{eqnarray*}
 and the test always accepts. \end{proof}

To analyze the soundness of the test $T$, we need to derive a Fourier
analytic expression for the acceptance probability of $T$.

\begin{prop} \label{prop:basic test fourier}Let $p$ be the acceptance
probability of $T$. Then \[
p=\frac{1}{2}+\frac{1}{2}\sum_{\alpha\in\cube^{n}}\widehat{f}(\alpha)^{3}\,2^{-|\alpha|}\left(1+\sum_{\beta\subseteq\alpha}\widehat{f}(\beta)\right).\]

\end{prop}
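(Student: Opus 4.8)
The plan is to rewrite the acceptance probability as the expectation of a single $\pcube$-valued product, expand everything in the Fourier basis, and reduce the whole calculation to evaluating one coordinate-wise expectation.

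Since $f$ takes values in $\pcube$, the test accepts precisely when $f(x_i)\,f(x_j)\,f(x_i+x_j+(v\vec{1}+y)\wedge z)=1$ and rejects when this product is $-1$, so
\[
p=\frac12+\frac12\,\Exp_{x_i,x_j,y,z}\Bigl[f(x_i)\,f(x_j)\,f\bigl(x_i+x_j+w\bigr)\Bigr],\qquad w:=(v\vec{1}+y)\wedge z,
\]
with $v=\tfrac{1-f(y)}2$. I would then substitute the Fourier expansion for each of the three occurrences of $f$; using $\chi_\gamma(x_i+x_j+w)=\chi_\gamma(x_i)\chi_\gamma(x_j)\chi_\gamma(w)$ together with the independence of $x_i,x_j$ from $(y,z)$, orthogonality of the characters collapses the triple sum to the diagonal $\alpha=\beta=\gamma$, leaving
\[
\Exp\bigl[f(x_i)f(x_j)f(x_i+x_j+w)\bigr]=\sum_{\alpha\in\cube^n}\widehat f(\alpha)^3\,\Exp_{y,z}\bigl[\chi_\alpha(w)\bigr].
\]

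The heart of the argument is computing $\Exp_{y,z}[\chi_\alpha(w)]$, and the subtlety is that $w$ depends on $y$ not only through the coordinates $y_k$ but also through the single bit $v=v(y)$, so one cannot simply Fourier-expand in $y$. Instead I would fix $y$ and average over $z$ first: since $\chi_\alpha(w)=\prod_{k\in\alpha}(-1)^{(v+y_k)\wedge z_k}$ and the $z_k$ are independent and uniform, each factor has expectation $1$ if $v+y_k=0$ and $0$ if $v+y_k=1$, so $\Exp_z[\chi_\alpha(w)\mid y]=\mathbf 1[\,y_k=v\text{ for all }k\in\alpha\,]$. Now split the remaining expectation over $y$ according to the sign of $f(y)$ (that is, $v=0$ versus $v=1$):
\[
\Exp_{y,z}[\chi_\alpha(w)]=\Exp_y\Bigl[\tfrac{1+f(y)}2\,\mathbf 1[y_k=0\ \forall k\in\alpha]\Bigr]+\Exp_y\Bigl[\tfrac{1-f(y)}2\,\mathbf 1[y_k=1\ \forall k\in\alpha]\Bigr].
\]
Expanding the two restriction indicators in characters, $\mathbf 1[y_k=0\ \forall k\in\alpha]=2^{-|\alpha|}\sum_{\beta\subseteq\alpha}\chi_\beta(y)$ and $\mathbf 1[y_k=1\ \forall k\in\alpha]=2^{-|\alpha|}\sum_{\beta\subseteq\alpha}(-1)^{|\beta|}\chi_\beta(y)$, and pairing each character with $1$ and with $f(y)$ (which picks out $\widehat f(\beta)$), a short computation gives
\[
\Exp_{y,z}[\chi_\alpha(w)]=2^{-|\alpha|}\Bigl(1+\sum_{\beta\subseteq\alpha}\tfrac{1-(-1)^{|\beta|}}2\,\widehat f(\beta)\Bigr).
\]

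Finally, folding closes the gap between this expression and the claimed one: $f(\vec{1}+x)=-f(x)$ forces $\widehat f(\beta)=-(-1)^{|\beta|}\widehat f(\beta)$, hence $\widehat f(\beta)=0$ whenever $|\beta|$ is even, so $\sum_{\beta\subseteq\alpha}\tfrac{1-(-1)^{|\beta|}}2\,\widehat f(\beta)=\sum_{\beta\subseteq\alpha}\widehat f(\beta)$. Substituting back,
\[
p=\frac12+\frac12\sum_{\alpha\in\cube^n}\widehat f(\alpha)^3\,2^{-|\alpha|}\Bigl(1+\sum_{\beta\subseteq\alpha}\widehat f(\beta)\Bigr),
\]
as desired. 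The only step I expect to require real care is the evaluation of $\Exp_{y,z}[\chi_\alpha(w)]$: averaging over $z$ before touching $y$, and then case-splitting on the sign of $f(y)$ rather than trying to expand $v(y)$ directly, is what makes the computation go through; it is also worth checking the $\alpha=\emptyset$ term separately, where the contribution vanishes on both sides since folding gives $\widehat f(\emptyset)=\Exp f=0$.
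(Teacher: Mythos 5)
Your proposal is correct; I checked the key computation $\Exp_{y,z}[\chi_\alpha((v\vec{1}+y)\wedge z)]=2^{-|\alpha|}\bigl(1+\sum_{\beta\subseteq\alpha}\tfrac{1-(-1)^{|\beta|}}{2}\widehat{f}(\beta)\bigr)$ and the final reconciliation via folding, and both are right. The overall strategy (arithmetize, Fourier-expand the three occurrences of $f$, reduce to a $y,z$-expectation of a single character) is the same as the paper's, but you organize the adaptive bit $v$ and the use of folding differently. The paper disposes of the adaptivity at the very start: it splits on $f(y)$, uses $f(\vec{1}+y)=-f(y)$ together with the fact that $y$ and $\vec{1}+y$ are identically distributed to merge the two branches into $2\Exp\bigl[\tfrac{1+f(y)}{2}\cdot\tfrac{1+\Acc(x_i,x_j,y,z)}{2}\bigr]$, invokes $\Exp f=0$, and then computes $\Exp_{y,z}[f(y)\chi_\alpha(y\wedge z)]$ by conditioning on $z\cap\alpha$. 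You instead keep $v=v(y)$ inside, average over $z$ first to turn the character into the restriction indicator $\mathbf{1}[y_k=v\ \forall k\in\alpha]$, case-split on $v$ there, expand the indicators in characters, and only at the end invoke folding in the form $\widehat{f}(\beta)=0$ for even $|\beta|$ (which also subsumes $\Exp f=0$). What your route buys is modularity: the intermediate identity with the weights $\tfrac{1-(-1)^{|\beta|}}{2}$ holds for an arbitrary $\pm1$-valued $f$, so the proof isolates exactly where and how folding is used; what the paper's early merge buys is brevity, since after the $y\mapsto\vec{1}+y$ substitution the expression splits cleanly into the two terms $\Exp[\Acc]$ and $\Exp[f(y)\Acc]$ with no parity bookkeeping. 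Both yield the stated formula, so there is nothing to fix.
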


For sanity check, let us interpret the expression for $p.$ Suppose
$f=\chi_{\alpha}$ for some $\alpha\neq\vec{0}\in\cube^{n}$, i.e.,
$\widehat{f}(\alpha)=1$ and all other Fourier coefficients of $f$
are $0$. Then clearly $p=\frac{1}{2}+2^{-|\alpha|}$, which equals
$1$ whenever $f$ is a dictator function as we have just shown. If
$|\alpha|$ is large, then $T$ accepts with probability close to
$\frac{1}{2}.$ We shall first analyze the soundness and then derive
this analytic expression for $p.$ \\

\begin{lem} The test $T$ is a dictatorship test with soundness $\frac{1}{2}.$
\end{lem}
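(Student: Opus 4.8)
The plan is to run a standard third-moment soundness analysis starting from the Fourier formula for the acceptance probability in Proposition~\ref{prop:basic test fourier}, using folding to control the bad term. First I would record the elementary estimate that for every $\alpha\in\cube^n$ the quantity $\sum_{\beta\subseteq\alpha}\widehat f(\beta)$ lies in $[-1,1]$: expanding,
\[
\sum_{\beta\subseteq\alpha}\widehat f(\beta)=\Exp_{x}\Big[f(x)\prod_{i\in\alpha}\big(1+(-1)^{x_i}\big)\Big],
\]
and since $\prod_{i\in\alpha}(1+(-1)^{x_i})$ equals $2^{|\alpha|}$ when $x$ vanishes on $\alpha$ and $0$ otherwise, this is exactly the average of $f$ over the subcube $\{x:x_i=0\ \forall i\in\alpha\}$, hence has absolute value at most $1$. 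Therefore $0\le 1+\sum_{\beta\subseteq\alpha}\widehat f(\beta)\le 2$, so if $p\ge\frac12+\epsilon$ then
\[
2\epsilon\ \le\ \sum_{\alpha\in\cube^n}\widehat f(\alpha)^3\,2^{-|\alpha|}\Big(1+\sum_{\beta\subseteq\alpha}\widehat f(\beta)\Big)\ \le\ 2\sum_{\alpha\in\cube^n}|\widehat f(\alpha)|^3\,2^{-|\alpha|}.
\]

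Next I would use folding: since $f$ is folded, $\widehat f(\vec 0)=\Exp f=0$, so the $\alpha=\vec 0$ term vanishes and the sum effectively runs over $\alpha\ne\vec 0$. I would then truncate at a threshold $w$. For $|\alpha|>w$ we have $2^{-|\alpha|}\le 2^{-(w+1)}$ and $|\widehat f(\alpha)|^3\le\widehat f(\alpha)^2$, so by Parseval the tail $\sum_{|\alpha|>w}|\widehat f(\alpha)|^3 2^{-|\alpha|}$ is at most $2^{-(w+1)}$; choosing $w=O(\log(1/\epsilon))$ so that $2^{-(w+1)}\le\epsilon/2$ leaves $\sum_{0<|\alpha|\le w}|\widehat f(\alpha)|^3\ge\epsilon/2$ (dropping $2^{-|\alpha|}\le 1$). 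Finally, writing $M=\max_{0<|\alpha|\le w}|\widehat f(\alpha)|$ and using $\sum_{0<|\alpha|\le w}|\widehat f(\alpha)|^3\le M\sum_\alpha\widehat f(\alpha)^2\le M$ by Parseval, we get $M\ge\epsilon/2$. Picking a nonzero $\alpha^*$ with $|\alpha^*|\le w$ attaining this maximum and any coordinate $i\in\alpha^*$ yields $\infl_i^{\le w}(f)\ge\widehat f(\alpha^*)^2\ge\epsilon^2/4$, establishing soundness $\frac12$ with $\tau=\epsilon^2/4$ and $w=O(\log(1/\epsilon))$.

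The one step that is genuinely necessary rather than routine is the appeal to folding to kill the $\alpha=\vec 0$ contribution: without it the constant function $f\equiv 1$ would push $p$ close to $\frac12+1$ while having no influential variable whatsoever, so no soundness statement of this form could hold. Everything else is Parseval combined with the crude bounds $|\widehat f(\alpha)|^3\le\widehat f(\alpha)^2$ and $2^{-|\alpha|}\le 2^{-(w+1)}$ on the high-degree tail, and I do not anticipate any real obstacle in carrying it out.
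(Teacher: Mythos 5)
Your proof is correct and follows essentially the same route as the paper: start from the Fourier expression for $p$, bound the factor $1+\sum_{\beta\subseteq\alpha}\widehat{f}(\beta)$, truncate the third-moment sum at a level $w$ chosen so the Parseval-controlled tail is at most $\epsilon/2$, and use folding to ensure the resulting large Fourier coefficient sits at some $\alpha\neq\vec{0}$, giving a variable of low-degree influence at least $\epsilon^{2}/4$. The only difference is cosmetic (and in fact slightly sharper): you bound $\sum_{\beta\subseteq\alpha}\widehat{f}(\beta)$ by $1$ as a subcube average, whereas the paper bounds it by $2^{|\alpha|/2}$ via Cauchy--Schwarz and then works with the decay $2^{-|\alpha|/2}$.
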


\begin{proof} Suppose the test $T$ passes with probability at least
$\frac{1}{2}+\eps,$ for some $\eps>0.$ By applying Proposition~\ref{prop:basic test fourier},
Cauchy-Schwarz, and Parseval's Identity, respectively, we
obtain\begin{eqnarray*}
\eps & \leq & \frac{1}{2}\sum_{\alpha\in\cube^{n}}\widehat{f}(\alpha)^{3}\,2^{-|\alpha|}\left(1+\sum_{\beta\subseteq\alpha}\widehat{f}(\beta)\right)\\
 & \leq & \frac{1}{2}\sum_{\alpha\in\cube^{n}}\widehat{f}(\alpha)^{3}\,2^{-|\alpha|}\left(1+\left(\sum_{\beta\subseteq\alpha}\widehat{f}(\beta)^{2}\right)^{\frac{1}{2}}\cdot2^{\frac{|\alpha|}{2}}\right)\\
 & \leq & \sum_{\alpha\in\cube^{n}}\widehat{f}(\alpha)^{3}\,2^{-\frac{|\alpha|}{2}}.\end{eqnarray*}

Pick the least positive integer $w$ such that $2^{-\frac{w}{2}}\leq\frac{\eps}{2}.$
Then by Parseval's again, \begin{eqnarray*}
\frac{\eps}{2} & \leq & \sum_{\alpha\in\cube^{n}:|\alpha|\leq w}\thinspace\widehat{f}(\alpha)^{3}\\
 & \leq & \max_{\alpha\in\cube^{n}:|\alpha|\leq w}\thinspace\left|\widehat{f}(\alpha)\right|.\end{eqnarray*}
 So there exists some $\beta\in\cube^{n},|\beta|\leq w$ such that
$\frac{\eps}{2}\leq\left|\hat{f}(\beta)\right|.$ With $f$ being
folded, $\beta\neq\vec{0}$. Thus, there exists an $i\in[n]$ such
that $\beta_{i}=1$ and \[
\frac{\eps^{2}}{4}\leq\widehat{f}(\beta)^{2}\leq\sum_{\alpha\in\cube^{n}:\alpha_{i}=1,|\alpha|\leq w}\thinspace\widehat{f}(\alpha)^{2}.\]
 \end{proof}

Now we give the straightforward Fourier analytic calculation for $p.$\\

\begin{proof}[Proof of Proposition \ref{prop:basic test fourier}]As
usual, we first arithmetize $p$. We write \begin{eqnarray*}
p & = & \Exp_{x_{i},x_{j},y,z}\left(\frac{1+f(y)}{2}\right)\left(\frac{1+\Acc(x_{i},x_{j},y,z)}{2}\right)+\\
 &  & \Exp_{x_{i},x_{j},y,z}\left(\frac{1-f(y)}{2}\right)\left(\frac{1+\Acc(x_{i},x_{j},\vec{1}+y,z)}{2}\right),\end{eqnarray*}
 where \[
\Acc(x_{i},x_{j},y,z)=f(x_{i})f(x_{j})f(x_{i}+x_{j}+(y\wedge z)).\]

Since $f$ is folded, $f(\vec{1}+y)=-f(y)$. As $y$ and $\vec{1}+y$
are both identically distributed in $\cube^{n}$, we have \[
p=2\Exp_{x_{i},x_{j},y,z}\left(\frac{1+f(y)}{2}\right)\left(\frac{1+\Acc(x_{i},x_{j},y,z)}{2}\right).\]

Since $\Exp f=0$, we can further simplify the above expression to
be \[
p=\frac{1}{2}+\frac{1}{2}\Exp_{x_{i},x_{j},y,z}\left[(1+f(y))\Acc(x_{i},x_{j},y,z)\right].\]

It suffices to expand out the terms $\Exp_{x_{i},x_{j},y,z}[\Acc(x_{i},x_{j},y,z)]$
and $\Exp_{x_{i},x_{j},y,z}[f(y)\Acc(x_{i},x_{j},y,z)]$. 

For the first term, it is not hard to show that \[
\Exp_{x_{i},x_{j},y,z}[\Acc(x_{i},x_{j},y,z)]=\sum_{\alpha\in\cube^{n}}\widehat{f}(\alpha)^{3}\enspace2^{-|\alpha|},\]
 by applying the Fourier inversion formula on $f$ and averaging over
$x_{i}$ and $x_{j}$ and then averaging over $y$ and $z$ over the
AND operator.

Now we compute the second term. Applying the Fourier inversion formula
to the last three occurrences of $f$ and averaging over $x_{i}$
and $x_{j}$, we obtain \[
\Exp_{x_{i},x_{j},y,z}[f(y)\Acc(x_{i},x_{j},y,z)]=\sum_{\alpha\in\cube^{n}}\widehat{f}(\alpha)^{3}\thinspace\Exp_{y,z}\left[f(y)\chi_{\alpha}(y\wedge z)\right].\]

It suffices to expand out $\Exp_{y,z}\left[f(y)\chi_{\alpha}(y\wedge z)\right]$.
By grouping the $z$'s according to their intersection with different
possible subsets $\beta$ of $\alpha$, we have \begin{eqnarray*}
\lefteqn{\Exp_{y,z}\left[f(y)\chi_{\alpha}(y\wedge z)\right]}\\
 & = & \sum_{\beta\subseteq\alpha}\thinspace\Pr_{z\in\cube^{n}}\left[z\cap\alpha=\beta\right]\thinspace\Exp_{y}\left[f(y)\prod_{i:\thinspace\alpha_{i}=1}(-1)^{y_{i}\wedge z_{i}}\right]\\
 & = & \sum_{\beta\subseteq\alpha}\thinspace2^{-|\alpha|}\thinspace\Exp_{y}\left[f(y)\prod_{i:\thinspace\beta_{i}=1}(-1)^{y_{i}}\right]\\
 & = & 2^{-|\alpha|}\thinspace\sum_{\beta\subseteq\alpha}\widehat{f}(\beta).\end{eqnarray*}
 Putting everything together, it is easy to see that we have the Fourier
analytic expression for $p$ as stated in the lemma. \end{proof}

\subsection{Hypergraph Dictatorship Test}

We prove the main theorem in this section. The basis of our hypergraph
dictatorship test will be very similar to the test in the previous
section. We remark that we did not choose to present the exact same
basic test for hopefully a clearer exposition.

We now address the tradeoff between query complexity and soundness.
If we simply repeat the basic test a number of iterations independently,
the error is reduced, but the query complexity increases. In other
words, the amortized query complexity does not change if we simply
run the basic test for many independent iterations. Following Trevisan~\cite{Tre98},
all the dictatorship tests that save query complexity do so by reusing
queries made in previous iterations of the basic test. To illustrate
this idea, suppose test $T$ queries $f$ at the points $x_{1}+h_{1}$,
$x_{2}+h_{2}$, $x_{1}+x_{2}+h_{1,2}$ to make a decision. For the
second iteration, we let $T$ query $f$ at the points $x_{3}+h_{3}$
and $x_{1}+x_{3}+h_{1,3}$ and reuse the value $f(x_{1}+h_{1})$ queried
during the first run of $T$. $T$ then uses the three values to make
a second decision. In total $T$ makes five queries to run two iterations.

We may think of the first run of $T$ as parametrized by the points
$x_{1}$ and $x_{2}$ and the second run of $T$ by $x_{1}$ and $x_{3}$.
In general, we may have $k$ points $x_{1},\ldots,x_{k}$ and a graph
on $[k]$ vertices, such that each edge $e$ of the graph corresponds
to an iteration of $T$ parametrized by the points $\{x_{i}\}_{i\in e}.$
We shall use a complete hypergraph on $k$ vertices to save on query
complexity, and we will argue that the soundness of the algorithm
decreases exponentially with respect to the number of iterations.

Formally, consider a hypergraph $H=([k],E)$. Let $\{f_{a}\}_{a\in[k]\cup E}$
be a collection of boolean functions of the form $f_{a}:\cube^{n}\rightarrow\pcube$.
We assume all the functions are folded, and so in particular, $\Exp f_{a}=0.$
Consider the following test:

\begin{breakbox} \noun{Hypergraph $H$-Test: }with oracle access
to $\{f_{a}\}_{a\in[k]\cup E}$, 
\begin{enumerate}
\item Pick $x_{1},\ldots,x_{k},y_{1},\ldots,y_{k}$, and $\{z_{a}\}_{a\in[k]\cup E}$
independently and uniformly at random from $\cube^{n}$. 
\item For each $i\in[k]$, query $f_{i}(y_{i})$. 
\item Let $v_{i}=\frac{1-f_{i}(y_{i})}{2}.$ \\
 Accept iff for every $e\in E$, \[
\prod_{i\in e}\left[f_{i}(x_{i}+(v_{i}\vec{1}+y_{i})\wedge z_{i})\right]=f_{e}\left(\sum_{i\in e}x_{i}+\left(\Sigma_{i\in e}(v_{i}\vec{1}+y_{i})\right)\wedge z_{e}\right).\]

\end{enumerate}
\end{breakbox}

We make a few remarks regarding the design of $H$-Test. The hypergraph
test by Samorodnitsky and Trevisan~\cite{SamTre06} accepts iff for
every $e\in E,$ $\prod_{i\in e}f_{i}(x_{i}+\eta_{i})$ equals $f_{e}(\sum_{i\in e}x_{i}+\eta_{e}),$
where the bits in each vector $\eta_{a}$ are chosen independently
to be $1$ with some small constant, say $0.01.$ The noise vectors
$\eta_{a}$ rule out the possibility that linear functions with large
support can be accepted. To obtain a test with perfect completeness,
we use ideas from~\cite{GLST98,ParRonSam,HasKho02} to simulate the
effect of the noise perturbation.

Note that for $y,z$ chosen uniformly at random from $\cube^{n},$
the vector $y\wedge z$ is a $\frac{1}{4}$--noisy vector. As observed
by Parnas, Ron, and Samorodnitsky~\cite{ParRonSam}, the test $f(y\wedge z)=f(y)\wedge f(z)$
distinguishes between dictators and linear functions with large support.
One can also combine linearity and dictatorship testing into a single
test of the form $f(x_{1}+x_{2}+y\wedge z)(f(y)\wedge f(z))=f(x_{1})f(x_{2})$
as Håstad and Khot demonstrated~\cite{HasKho02}. However, iterating
this test is too costly for us. In fact, Håstad and Khot also consider
an adaptive variant that reads $k^{2}+2k$ bits to obtain a soundness
of $2^{-k^{2}}$, the same parameters as in~\cite{SamTre00}, while
achieving perfect completeness as well. Without adaptivity, the test
in~\cite{HasKho02} reads $k^{2}+4k$ bits. While both the nonadaptive
and adaptive tests in~\cite{HasKho02} have the same amortized query
complexity, extending the nonadaptive test by Håstad and Khot to the
hypergraph setting does not work for us. So to achieve the same amortized
query complexity as the hypergraph test in~\cite{SamTre06}, we also
exploit adaptivity in our test.

\begin{theorem}[main theorem restated] For infinitely many $t$,
there exists an adaptive $t$-function dictatorship test with $t+\log(t+1)$
queries, completeness $1$, and soundness $\frac{(t+1)^{2}}{2^{t}}$.
\end{theorem}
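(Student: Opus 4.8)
The plan is to instantiate the Hypergraph $H$-Test with $H$ the complete hypergraph on $k$ vertices (so $E$ consists of all $2^k - k - 1$ subsets of $[k]$ of size $\geq 2$, or perhaps all nonempty subsets, with a suitable choice making the query count work out), set $t = k + |E|$, and verify the two required properties. The number of queries is $|E|$ (one for each edge constraint, reusing the $f_i(x_i + \cdots)$ values across edges) plus $k$ (the queries $f_i(y_i)$ in step 2), and a careful count plus the change of variables $t \mapsto t + \log(t+1)$ from the statement should give exactly $t + \log(t+1)$ queries for the appropriate infinite family of $t$. Completeness $1$ is the easy direction: if every $f_a$ is the $\ell$-th dictator, then as in the Basic Test lemma, $v_i + (y_i)_\ell = 0$, so each term $(v_i\vec{1} + y_i)\wedge z_i$ contributes nothing in coordinate $\ell$, and by linearity of the dictator each edge constraint is satisfied identically; one just has to check the summed version $\big(\sum_{i\in e}(v_i\vec 1 + y_i)\big)\wedge z_e$ also vanishes in coordinate $\ell$, which it does since each $(v_i\vec 1 + y_i)_\ell = 0$.

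For soundness, I would first arithmetize the acceptance probability. Writing the acceptance indicator for edge $e$ as $\frac{1}{2}\big(1 + \prod_{i\in e}f_i(\cdots) \cdot f_e(\cdots)\big)$ and expanding the product over all $e \in E$, the probability that $T$ accepts becomes $2^{-|E|} \sum_{F \subseteq E} \Exp[\prod_{e \in F}(\text{edge-}e\text{ term})]$. The $F = \emptyset$ term gives $2^{-|E|}$, which should be (the bulk of) the soundness $s$; so if the test accepts with probability $s + \eps$ we get a nonempty $F$ with $\Exp[\prod_{e\in F}(\cdots)] \geq \eps \cdot 2^{|E|}/|F| \geq \eps$ (roughly). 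Pick a maximal edge $e^\ast \in F$. The plan is then to absorb the $f_i(y_i)$ folding/noise factors and all the $\wedge z$ noise operators by Fourier-expanding exactly as in Proposition~\ref{prop:basic test fourier}: averaging over $z_{e^\ast}$ and the $z_i$ produces $2^{-|\alpha|}$ damping factors and the extra $\big(1 + \sum_{\beta\subseteq\alpha}\widehat{f}(\beta)\big)$-type terms, while averaging over the variables $x_i$ for $i \in e^\ast$ that appear in no other edge of $F$ (here maximality of $e^\ast$ is used) forces the relevant Fourier characters to match up. After this reduction, the expression for $\Exp[\prod_{e\in F}(\cdots)]$ should be bounded above by a constant (depending on $|F|$, hence on $k$) times a $d$-th dimension Gowers linear inner product $\langle \{g_S\}_{S\subseteq[d]}\rangle_{LU_d}$ of functions $g_S$ built from the $f_a$'s, with $d = |e^\ast| \leq k$.

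At that point I invoke Lemma~\ref{lem:ST gowers inverse}: since the functions are folded, $\Exp g_{[d]} = 0$, and $\langle \{g_S\}\rangle_{LU_d} \geq \eps' = \eps/2^{O(k)}$ yields two distinct $S \neq T \subseteq [d]$ and a variable $i$ with $\infl_i(g_S), \infl_i(g_T) \geq (\eps')^4/2^{O(k)}$. Tracing back, $g_S$ and $g_T$ are (products of a bounded number, at most $k$, of) the original functions $f_a$, so by Lemma~\ref{lem: ST influence of product} some individual $f_a, f_b$ with $a \neq b$ share an influential variable $i$; restricting to low degree is handled by the standard truncation argument from the Basic Test soundness proof (the $2^{-|\alpha|}$ factors let us discard high-degree Fourier mass, fixing $w = w(\eps,k)$). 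The main obstacle I anticipate is the bookkeeping in the arithmetization: correctly handling the interaction of the adaptive $v_i$-shifts with the summed noise term $\big(\sum_{i\in e}(v_i\vec 1 + y_i)\big)\wedge z_e$ across overlapping edges, and making sure that after expanding $\prod_{e \in F}$ the cross terms genuinely organize into a single linear Gowers inner product rather than something messier — this is exactly where the two-pass adaptive structure (querying $f_i(y_i)$ first) has to be exploited, and where simply mimicking~\cite{SamTre06} or the nonadaptive~\cite{HasKho02} test fails.
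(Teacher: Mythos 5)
Your outline follows the paper's route: instantiate the $H$-Test with the complete hypergraph on $k=\log(t+1)$ vertices, expand $\prod_{e}\frac{1+\Acc(e)}{2}$, average to a nonempty set of edges, pass to a maximal edge, organize the remaining expectation into a linear Gowers inner product, and finish with Lemma~\ref{lem:ST gowers inverse}, Lemma~\ref{lem: ST influence of product}, and a noise computation (Proposition~\ref{pro:noisy function}) to convert influence of the noisy functions into low-degree influence of the $f_a$'s. But there is a genuine gap at precisely the step you defer as ``the main obstacle'': the removal of the adaptive first pass. Your arithmetization writes the acceptance probability as $2^{-|E|}\sum_{F\subseteq E}\Exp[\prod_{e\in F}(\cdots)]$ and takes the $F=\emptyset$ term $2^{-|E|}$ as the bulk of the soundness; this is not usable as stated, because the second-pass query points contain the data-dependent shifts $v_i=\frac{1-f_i(y_i)}{2}$, and $2^{-|E|}$ is not the soundness the argument yields. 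The missing idea is short but essential: condition on $v\in\{0,1\}^k$ by inserting $\prod_{i}\frac{1+(-1)^{v_i}f_i(y_i)}{2}$ and summing over $v$, use folding to rewrite $(-1)^{v_i}f_i(y_i)=f_i(v_i\vec{1}+y_i)$, and substitute $y_i\mapsto v_i\vec{1}+y_i$ so the whole expression becomes $v$-free; then bound $\prod_i\bigl(1+f_i(y_i)\bigr)\le 2^k$ and discard these factors. That crude bound is exactly where the factor $2^k$ comes from, so the soundness is $2^{k-|E|}=(t+1)^2/2^t$, not $2^{-|E|}$. Your alternative plan to ``Fourier-expand exactly as in Proposition~\ref{prop:basic test fourier}'' (keeping the $(1+f_i(y_i))$ factors) reintroduces in the hypergraph setting the very cross terms between first-pass factors and the $\wedge z$ noise operators that you acknowledge you cannot organize into a single Gowers expression; the paper never does that expansion for the $H$-Test.

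Two secondary corrections. First, the query count is $|E|+2k$, not $|E|+k$: each vertex function $f_i$ is queried twice, at $y_i$ and again at $x_i+(v_i\vec{1}+y_i)\wedge z_i$; with $E$ all subsets of size at least $2$, $|E|=2^k-k-1$, $t=k+|E|=2^k-1$, this gives $t+\log(t+1)$ queries and matches the statement. Second, the maximality of the chosen edge $(1,\ldots,d)$ is not used to ``match characters'' when averaging over its private $x_i$'s; it is used to guarantee that $G_{[d]}=g_{[d]}$ alone (no other surviving term has intersection $[d]$ with $[d]$), so that folding gives $\Exp G_{[d]}=0$, which is the hypothesis of Lemma~\ref{lem:ST gowers inverse}. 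Relatedly, the inner product must be set up over the doubled domain $\cube^{2n}$ with pairs $(x_i;y_i)$, the independent $z_a$'s having been absorbed into $g_a(x;y)=\Exp_z f_a(c'_a+x+(c_a+y)\wedge z)$; once that is in place, the remainder of your sketch (products of at most $2^k$ functions, Lemma~\ref{lem: ST influence of product}, then the $2^{-|\alpha|}$ damping to fix $w$ and pass to $\infl_i^{\leq w}(f_a),\infl_i^{\leq w}(f_b)$) agrees with the paper.
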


\begin{proof} Take a complete hypergraph on $k$ vertices, where
$k=\log(t+1).$ The statement follows by applying Lemmas \ref{lem:H-Test completeness}
and \ref{lem:H-Test soundness}. \end{proof}

\begin{lem} \label{lem:H-Test completeness}The $H$-Test is a $(k+|E|)$-function
dictatorship test that makes $|E|+2k$ queries and has completeness
$1$. \end{lem}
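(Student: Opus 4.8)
The plan is to verify the two counting claims by direct inspection of the $H$-Test and then to check completeness by substituting a dictator for every function and tracking a single coordinate through each expression; there is no deep obstacle here, so most of the work is careful bookkeeping.

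First I would count queries. Step~2 reads $f_i(y_i)$ for each $i\in[k]$, which is $k$ queries, and these reads also determine the bits $v_i$, so step~3 needs no further access to those values. The key observation for step~3 is that the point $x_i+(v_i\vec 1+y_i)\wedge z_i$ at which $f_i$ is evaluated inside the product does not depend on the edge $e$; hence, ranging over all of $E$, the test reads each vertex function $f_i$ at just one additional point, contributing $k$ queries, while it reads each edge function $f_e$ exactly once, contributing $|E|$ queries. The total is $|E|+2k$. Since the index sets $[k]$ and $E$ are disjoint (elements of $E$ are subsets of $[k]$, not elements of $[k]$), the test has oracle access to $k+|E|$ functions, as claimed; note also that every dictator is consistent with the folding convention, so folding does not interfere with the completeness argument.

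Next I would establish completeness. Suppose there is a coordinate $\ell$ with $f_a(x)=(-1)^{x_\ell}$ for every $a\in[k]\cup E$. Then $f_i(y_i)=(-1)^{(y_i)_\ell}$, so $v_i=\frac{1-(-1)^{(y_i)_\ell}}{2}=(y_i)_\ell$, and hence $v_i+(y_i)_\ell\equiv 0\pmod 2$; that is, the $\ell$-th coordinate of $v_i\vec 1+y_i$ vanishes. Therefore the $\ell$-th coordinate of $(v_i\vec 1+y_i)\wedge z_i$ vanishes, and since the $\ell$-th coordinate of $\sum_{i\in e}(v_i\vec 1+y_i)$ is a sum of zeros, the $\ell$-th coordinate of $\bigl(\sum_{i\in e}(v_i\vec 1+y_i)\bigr)\wedge z_e$ vanishes as well. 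Using linearity of the dictator, the left side of the acceptance condition for an edge $e$ equals $\prod_{i\in e}(-1)^{(x_i)_\ell}=(-1)^{\sum_{i\in e}(x_i)_\ell}$ and the right side equals $(-1)^{\sum_{i\in e}(x_i)_\ell}$, so every edge constraint is satisfied and the test accepts with probability $1$.

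As for the main obstacle: there is essentially none in this lemma beyond the query-sharing observation, which is exactly what yields complexity $|E|+2k$ rather than a cost growing with the edge sizes; the genuine difficulty of the construction is isolated in the soundness claim, Lemma~\ref{lem:H-Test soundness}, which will be proved by expanding the acceptance probability in Fourier coefficients in the spirit of Proposition~\ref{prop:basic test fourier} and then invoking the Gowers-norm inverse statement, Lemma~\ref{lem:ST gowers inverse}.
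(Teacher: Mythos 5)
Your proposal is correct and follows essentially the same route as the paper: count the $k$ first-pass queries plus one query per function in $[k]\cup E$ (noting, as you make explicit, that the point $x_i+(v_i\vec 1+y_i)\wedge z_i$ is independent of the edge $e$), and for completeness observe that $v_i+(y_i)_\ell\equiv 0\pmod 2$ kills the $\ell$-th coordinate of every noise term, so linearity of the dictator makes both sides of each edge constraint equal $(-1)^{\sum_{i\in e}(x_i)_\ell}$. No gaps; your remark on folding being consistent with dictators is a harmless extra check.
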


\begin{proof} The test makes $k$ queries $f_{i}(y_{i})$ in the
first pass, and based on the answers to these $k$ queries, the test
then makes one query into each function $f_{a},$ for each $a\in[k]\cup E.$
So the total number of queries is $|E|+2k.$

Now suppose all the functions are the $\ell$-th dictator for some
$\ell\in[n]$, i.e., for all $a\in[k]\cup E,$ $f_{a}=f,$ where $f(x)=(-1)^{x_{\ell}}$.
Note that for each $i\in[k],$ \[
v_{i}+y_{i}(\ell)=\frac{1-(-1)^{y_{i}(\ell)}}{2}+y_{i}(\ell),\]
 which evaluates to $0$. Thus for each $e\in E,$ \begin{eqnarray*}
\prod_{i\in e}f_{i}(x_{i}+(v_{i}\vec{1}+y_{i})\wedge z_{i}) & = & f\left(\sum_{i\in e}x_{i}\right)\cdot\prod_{i\in e}f((v_{i}\vec{1}+y_{i})\wedge z_{i})\\
 & = & f\left(\sum_{i\in e}x_{i}\right)\cdot\prod_{i\in e}(-1)^{(v_{i}+y_{i}(\ell))\wedge z_{i}(\ell)}\\
 & = & f\left(\sum_{i\in e}x_{i}\right),\end{eqnarray*}

and similarly, \[
f_{e}\left(\sum_{i\in e}x_{i}+\left(\Sigma_{i\in e}(v_{i}\vec{1}+y_{i})\right)\wedge z_{e}\right)=f\left(\sum_{i\in e}x_{i}\right).\]
 Hence the test always accepts. \end{proof} 

\begin{lem} \label{lem:H-Test soundness}The $H$-Test has soundness
$2^{k-|E|}.$ \end{lem}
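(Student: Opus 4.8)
The goal is to show that if the $H$-Test accepts with probability at least $2^{k-|E|}+\eps$ for some $\eps > 0$, then two of the functions $f_a,f_b$ share a variable with non-negligible low-degree influence. The strategy is to arithmetize the acceptance probability, isolate a single edge's contribution, and apply the Gowers-norm inverse theorem (Lemma~\ref{lem:ST gowers inverse}). First I would write the acceptance predicate as a product over edges of indicator-style terms $\frac{1}{2}(1 + \text{Acc}_e)$, where $\text{Acc}_e$ is the $\pm 1$-valued expression $\prod_{i\in e} f_i(x_i + (v_i\vec 1 + y_i)\wedge z_i) \cdot f_e(\cdots)$. Expanding the product over all $e \in E$ gives $2^{-|E|}$ times a sum over subsets $F \subseteq E$ of $\Exp[\prod_{e\in F}\text{Acc}_e]$; the $F=\emptyset$ term contributes exactly $2^{-|E|}$, matching the ``$2^{k-|E|}$'' in the soundness only after we account for the first-pass queries $f_i(y_i)$ and the folding. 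Using folding ($\Exp f_a = 0$, $f_a(\vec 1 + y) = -f_a(y)$) exactly as in the proof of Proposition~\ref{prop:basic test fourier}, the $v_i$-dependence can be absorbed: replacing each $f_i(y_i)$-weighted average over $y_i$ by an unweighted one introduces the $\frac{1}{2}(1 + f_i(y_i))$ factors and effectively removes the $v_i\vec 1$ shifts, at the cost of a factor $2^k$. So the acceptance probability becomes $2^{k-|E|}$ plus $2^{k-|E|}$ times a sum over nonempty $F \subseteq E$ of expectations of products of the $f$'s over the $x_i$, $y_i$, $z_a$.

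**Reducing to a single edge.** From $p \geq 2^{k-|E|} + \eps$, some nonempty $F \subseteq E$ has $\Exp[\cdots] \geq \eps/2^{|E|}$ (crude, but $|E|$ is a constant once $k$ is fixed, so any $\eps$-dependence survives into the final $\tau$). Pick a maximal edge $e^\star \in F$ — one not properly contained in any other element of $F$. The key structural observation is that $f_{e^\star}$ appears in the expression with argument $\sum_{i\in e^\star} x_i + (\sum_{i \in e^\star}(v_i \vec1 + y_i)) \wedge z_{e^\star}$, and because $z_{e^\star}$ is a fresh random vector not shared with any other edge in $F$, averaging over $z_{e^\star}$ (and over the $y_i \wedge z_i$ noise, grouping $z_{e^\star}$'s intersection with subsets as in the Proposition~\ref{prop:basic test fourier} calculation) lets me rewrite the whole expectation, for fixed values of everything else, as a Gowers linear inner product in the $|e^\star|$ variables $\{x_i\}_{i \in e^\star}$ of a collection of $2^{|e^\star|}$ functions — one of which is $f_{e^\star}$ itself (up to the noise convolution) assigned to the top set $[e^\star]$, and the others are products of the $f_i$'s and the remaining $f_{e'}$'s for $e' \in F$ with $e' \subsetneq e^\star$, restricted appropriately. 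The averaging over $z_{e^\star}$ is precisely what converts $f_{e^\star}(\cdot + (\cdot)\wedge z_{e^\star})$ into a bounded function of $\sum_{i\in e^\star} x_i$ with the right structure; this is where the design of the test (fresh $z_e$ per edge) is used.

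**Invoking the inverse theorem.** Once the expectation is cast as $\langle \{g_S\}_{S \subseteq e^\star}\rangle_{LU_{|e^\star|}} \geq \eps/2^{O(|E|)}$ with $g_{[e^\star]}$ essentially $f_{e^\star}$ convolved with noise (so $\Exp g_{[e^\star]} = 0$ by folding), Lemma~\ref{lem:ST gowers inverse} yields a variable $i$ and sets $S \neq T \subseteq e^\star$ with $\infl_i(g_S), \infl_i(g_T) \geq \eps^4/2^{O(|E|)}$. Then I use Lemma~\ref{lem: ST influence of product}: each $g_S$ is a bounded product of a constant number (at most $|F| \le |E|$, a constant) of the original functions $f_a$, possibly with noise applied — and noise only decreases influence, while the product bound converts influence of $g_S$ into a lower bound of $\eps^4/2^{O(|E|)}$ on $\infl_i(f_a)$ for at least one $f_a$ among those composing $g_S$. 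Since $S \neq T$, the two lists of constituent functions differ, so we get two \emph{distinct} functions $f_a, f_b$ with $\infl_i(f_a), \infl_i(f_b)$ bounded below by a constant depending only on $\eps$ and $k$. Finally, passing from influence to low-degree influence is the standard truncation: choose $w$ large enough (depending on $\eps$) that the tails $\sum_{|\alpha| > w}\hat f_a(\alpha)^2$ contribute at most half, which is legitimate because each $f_a$ is bounded so its Fourier mass is bounded; this gives $\infl_i^{\le w}(f_a), \infl_i^{\le w}(f_b) \geq \tau$ for a suitable $\tau = \tau(\eps, k) > 0$ and fixed $w = w(\eps)$.

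**Main obstacle.** The delicate point is the ``reducing to a single edge'' step: correctly bookkeeping which functions land in which $g_S$, verifying that averaging over the shared variables $x_i$ ($i \notin e^\star$) and the noise vectors $z_i, z_a$ leaves a genuine linear Gowers inner product rather than something subtly different, and ensuring the maximal edge $e^\star$ really does contribute $f_{e^\star}$ to a single set $S = [e^\star]$ so that the ``$\Exp f_{[d]} = 0$'' hypothesis of Lemma~\ref{lem:ST gowers inverse} is met via folding. Handling the $v_i$-shifts cleanly — confirming that the folding manipulation that worked for one function in Proposition~\ref{prop:basic test fourier} goes through uniformly across all edges and all $F$ — is the other place where care is needed, though it is more routine.
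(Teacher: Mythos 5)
Your outline follows the paper's own proof nearly step for step (arithmetize, use folding to absorb the $v_i$ shifts at a cost of $2^{k}$, expand the product over edges into subsets $E'\subseteq E$, pass to a maximal edge, recast the surviving expectation as a linear Gowers inner product of noise-smoothed functions, invoke Lemma~\ref{lem:ST gowers inverse} and then Lemma~\ref{lem: ST influence of product}), but the final step has a genuine gap. You first pass from the smoothed functions back to the originals by saying ``noise only decreases influence,'' obtaining a lower bound on $\infl_i(f_a)$, and then claim that low-degree influence follows by ``standard truncation: choose $w$ large enough that the tail $\sum_{|\alpha|>w}\widehat{f_a}(\alpha)^2$ contributes at most half, which is legitimate because $f_a$ is bounded.'' That inference is false: boundedness controls the \emph{total} Fourier mass, not where it sits. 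A parity $\chi_\alpha$ with $|\alpha|$ large has $\infl_i(f)=1$ for every $i\in\alpha$ but $\infl_i^{\leq w}(f)=0$ for every fixed $w<|\alpha|$; indeed the soundness condition is phrased with low-degree influence precisely so that such functions are \emph{not} caught, so no argument that only retains a lower bound on plain influence of $f_a$ can close the proof.

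The missing mechanism is exactly the structure you discarded. In the paper the Gowers variables are the pairs $(x_i;y_i)\in\cube^{2n}$, so the functions produced by Lemma~\ref{lem:ST gowers inverse} and then Lemma~\ref{lem: ST influence of product} are the smoothed functions $g_a(x;y)=\Exp_{z}f_a(c'_a+x+(c_a+y)\wedge z)$, not the $f_a$ themselves. Proposition~\ref{pro:noisy function} gives $\widehat{g_a}(\alpha;\beta)^2=\widehat{f_a}(\alpha)^2\,1_{\{\beta\subseteq\alpha\}}\,4^{-|\alpha|}$, hence $\infl_i(g_a)=\sum_{\alpha:\,i\in\alpha}\widehat{f_a}(\alpha)^2\,2^{-|\alpha|}$; the terms with $|\alpha|>w$ contribute at most $2^{-w}$, so once $2^{-w}\leq\tau/2^{2k+1}$ the lower bound $\infl_i(g_a)\geq\tau/2^{2k}$ transfers to $\infl_i^{\leq w}(f_a)\geq\tau/2^{2k+1}$. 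Relatedly, your reduction sets up the inner product ``in the variables $\{x_i\}_{i\in e^\star}$'' with the $y$'s handled separately: if you fix or average out the $y_i$'s before applying the inverse lemma, the smoothing shift $c_a+\bar y$ is a constant and provides no $2^{-|\alpha|}$ attenuation, so the degree damping needed above is lost. Keep the $y_i$'s as halves of the Gowers variables (as the paper does) and carry the smoothed $g_a$ all the way to the end, and your argument coincides with the paper's.
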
 

Before proving Lemma \ref{lem:H-Test soundness} we first prove a
proposition relating the Fourier transform of a function perturbed
by noise to the function's Fourier transform itself.

\begin{prop} \label{pro:noisy function}Let $f:\cube^{n}\rightarrow\pcube.$
Define $g:\cube^{2n}\rightarrow[-1,1]$ to be \[
g(x;y)=\Exp_{z\in\cube^{n}}f(c'+x+(c+y)\wedge z),\]
 where $c,c'$ are some fixed vectors in $\cube^{n}.$ Then \[
\widehat{g}(\alpha;\beta)^{2}=\widehat{f}(\alpha)^{2}\thinspace1_{\{\beta\subseteq\alpha\}}4^{-|\alpha|}.\]

\end{prop}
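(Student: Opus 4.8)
The plan is to expand $g$ directly into the Fourier basis of $\cube^{2n}$ (whose characters are $(x;y)\mapsto\chi_{\alpha}(x)\chi_{\beta}(y)$) and simply read off its coefficients. First I would apply the Fourier inversion formula to $f$ inside the definition of $g$ and pull the shift $c'$ and the variable $x$ out through the homomorphism property of $\chi_{\gamma}$:
\[
g(x;y)=\sum_{\gamma\in\cube^{n}}\widehat{f}(\gamma)\,\Exp_{z}\bigl[\chi_{\gamma}\bigl(c'+x+(c+y)\wedge z\bigr)\bigr]=\sum_{\gamma}\widehat{f}(\gamma)\,\chi_{\gamma}(c')\,\chi_{\gamma}(x)\,\Exp_{z}\bigl[\chi_{\gamma}((c+y)\wedge z)\bigr].
\]

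The next step is to evaluate the inner expectation coordinatewise. Since the coordinates $z_{i}$ are independent and uniform, $\Exp_{z}[\chi_{\gamma}((c+y)\wedge z)]=\prod_{i:\gamma_{i}=1}\Exp_{z_{i}}[(-1)^{(c_{i}+y_{i})\wedge z_{i}}]$, and each factor equals $1$ when $c_{i}+y_{i}=0$ and $0$ when $c_{i}+y_{i}=1$. Hence this expectation is exactly the indicator $\mathbf{1}_{\{y_{i}=c_{i}\text{ for all }i\text{ with }\gamma_{i}=1\}}$. I would then re-expand this indicator into characters of $y$ using $\mathbf{1}_{\{y_{i}=c_{i}\}}=\tfrac12\bigl(1+(-1)^{y_{i}+c_{i}}\bigr)$ and multiplying over $i\in\gamma$, which gives
\[
\mathbf{1}_{\{y_{i}=c_{i}\,\forall i\in\gamma\}}=2^{-|\gamma|}\sum_{\beta\subseteq\gamma}\chi_{\beta}(c)\,\chi_{\beta}(y).
\]

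Substituting back yields $g(x;y)=\sum_{\gamma}\sum_{\beta\subseteq\gamma}\widehat{f}(\gamma)\,\chi_{\gamma}(c')\,2^{-|\gamma|}\,\chi_{\beta}(c)\,\chi_{\gamma}(x)\,\chi_{\beta}(y)$. Because $\{\chi_{\alpha}(x)\chi_{\beta}(y)\}$ is an orthonormal basis of $\cube^{2n}$, matching coefficients forces $\gamma=\alpha$ and $\beta\subseteq\alpha$, so $\widehat{g}(\alpha;\beta)=\widehat{f}(\alpha)\,\chi_{\alpha}(c')\,\chi_{\beta}(c)\,2^{-|\alpha|}\,\mathbf{1}_{\{\beta\subseteq\alpha\}}$. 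Squaring and using $\chi_{\alpha}(c')^{2}=\chi_{\beta}(c)^{2}=1$ and $\mathbf{1}_{\{\beta\subseteq\alpha\}}^{2}=\mathbf{1}_{\{\beta\subseteq\alpha\}}$ gives $\widehat{g}(\alpha;\beta)^{2}=\widehat{f}(\alpha)^{2}\,\mathbf{1}_{\{\beta\subseteq\alpha\}}\,4^{-|\alpha|}$, as claimed.

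This is a routine Fourier computation, so there is no serious obstacle; the only step requiring a bit of care is recognizing that averaging $\chi_{\gamma}$ over the AND-noise $z$ collapses to an indicator supported on those $y$ that agree with $c$ on $\gamma$, and then re-expanding that indicator back into characters of $y$ to pick up the constraint $\beta\subseteq\alpha$ together with the factor $2^{-|\alpha|}$ (which becomes $4^{-|\alpha|}$ after squaring). The fixed shifts $c,c'$ contribute only unit-modulus sign factors $\chi_{\beta}(c),\chi_{\alpha}(c')$, which disappear upon squaring — this is precisely why the proposition is stated for $\widehat{g}(\alpha;\beta)^{2}$ rather than for $\widehat{g}(\alpha;\beta)$ itself.
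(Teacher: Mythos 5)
Your proof is correct and takes essentially the same route as the paper's: a direct coordinatewise Fourier computation in which averaging over $z$ contributes a factor of $\tfrac12$ for each coordinate of $\alpha$ and the independence of the $y$-coordinates forces $\beta\subseteq\alpha$. The only cosmetic difference is that the paper computes $\widehat{g}(\alpha;\beta)^{2}$ directly as a squared expectation, while you expand $f$ by the inversion formula and read off $\widehat{g}(\alpha;\beta)$ itself (including the sign factors $\chi_{\alpha}(c')\chi_{\beta}(c)$) before squaring.
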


\begin{proof} This is a straightforward Fourier analytic calculation.
By definition, \[
\widehat{g}(\alpha;\beta)^{2}=\left(\Exp_{x,y,z\in\cube^{n}}f(c'+x+(c+y)\wedge z)\chi_{\alpha}(x)\chi_{\beta}(y)\right)^{2}.\]
 By averaging over $x$ it is easy to see that \[
\widehat{g}(\alpha;\beta)^{2}=\widehat{f}(\alpha)^{2}\left(\Exp_{y,z\in\cube^{n}}\chi_{\alpha}((c+y)\wedge z)\chi_{\beta}(y)\right)^{2}.\]

Since the bits of $y$ are chosen independently and uniformly at random,
if $\beta\backslash\alpha$ is nonempty, the above expression is zero.
So we can write \[
\widehat{g}(\alpha;\beta)^{2}=\widehat{f}(\alpha)^{2}\thinspace1_{\{\beta\subseteq\alpha\}}\left(\prod_{i\in\alpha\backslash\beta}\thinspace\Exp_{y_{i},z_{i}}(-1)^{(c_{i}+y_{i})\wedge z_{i}}\cdot\prod_{i\in\beta}\Exp_{y_{i},z_{i}}(-1)^{(c_{i}+y_{i})\wedge z_{i}+y_{i}}\right)^{2}.\]

It is easy to see that the term $\Exp_{y_{i},z_{i}}(-1)^{(c_{i}+y_{i})\wedge z_{i}}$
evaluates to $\frac{1}{2}$ and the term $\Exp_{y_{i},z_{i}}(-1)^{(c_{i}+y_{i})\wedge z_{i}+y_{i}}$
evaluates to $(-1)^{c_{i}}\frac{1}{2}.$ Thus \[
\widehat{g}(\alpha;\beta)^{2}=\widehat{f}(\alpha)^{2}\enspace1_{\{\beta\subseteq\alpha\}}\thinspace4^{-|\alpha|}\]
 as claimed. \end{proof}

Now we prove Lemma \ref{lem:H-Test soundness}.\\

\begin{proof}[Proof of Lemma \ref{lem:H-Test soundness}] Let $p$
be the acceptance probability of $H$-test. Suppose that $2^{k-|E|}+\epsilon\leq p$.
We want to show that there are two functions $f_{a}$ and $f_{b}$
such that for some $i\in[n]$, some fixed positive integer $w,$ some
constant $\eps'>0$, it is the case that $\infl_{i}^{\leq w}(f_{a}),\infl_{i}^{\leq w}(f_{b})\geq\eps'.$
As usual we first arithmetize $p$. We write \[
p=\sum_{v\in\{0,1\}^{k}}\thinspace\Exp_{\{x_{i}\},\{y_{i}\},\{z_{a}\}}\thinspace\prod_{i\in[k]}\frac{1+(-1)^{v_{i}}f_{i}(y_{i})}{2}\thinspace\prod_{e\in E}\frac{1+\Acc(\{x_{i},y_{i},v_{i},z_{i}\}_{i\in e},z_{e})}{2},\]

where \begin{eqnarray*}
\Acc(\{x_{i},y_{i},v_{i},z_{i}\}_{i\in e},z_{e}) & = & \prod_{i\in e}\left[f_{i}(x_{i}+(v_{i}\vec{1}+y_{i})\wedge z_{i})\right]\\
 &  & \cdot\thinspace f_{e}\left(\sum_{i\in e}x_{i}+\left(\Sigma_{i\in e}(v_{i}\vec{1}+y_{i})\right)\wedge z_{e}\right).\end{eqnarray*}

For each $i\in[k],$ $f_{i}$ is folded, so $(-1)^{v_{i}}f_{i}(y_{i})=f_{i}(v_{i}\vec{1}+y_{i}).$
Since the vectors $\{y_{i}\}_{i\in[k]}$ are uniformly and independently
chosen from $\cube^{n},$ for a fixed $v\in\cube^{k},$ the vectors
$\{v_{i}\vec{1}+y_{i}\}_{i\in[k]}$ are also uniformly and independently
chosen from $\cube^{n}.$ So we can simplify the expression for $p$
and write \[
p=\Exp_{\{x_{i}\},\{y_{i}\},\{z_{a}\}}\left[\thinspace\prod_{i\in[k]}\left(1+f_{i}(y_{i})\right)\thinspace\prod_{e\in E}\frac{1+(\Acc\{x_{i},y_{i},\vec{0},z_{i}\}_{i\in e},z_{e})}{2}\right].\]
 Instead of writing $\Acc(\{x_{i},y_{i},\vec{0},z_{i}\}_{i\in e},z_{e}),$
for convenience we shall write $\Acc(e)$ to be a notational shorthand.
Observe that since $1+f_{i}(y_{i})$ is either $0$ or $2$, we may
write \[
p\leq2^{k}\Exp_{\{x_{i}\},\{y_{i}\},\{z_{a}\}}\left[\thinspace\prod_{e\in E}\frac{1+\Acc(e)}{2}\right].\]

Note that the product of sums $\prod_{e\in E}\frac{1+\Acc(e)}{2}$
expands into a sum of products of the form \[
2^{-|E|}\left(1+\sum_{\emptyset\neq E'\subseteq E}\enspace\prod_{e\in E'}\Acc(e)\right),\]
 so we have \[
\frac{\eps}{2^{k}}\leq\Exp_{\{x_{i}\},\{y_{i}\},\{z_{a}\}}\left[\thinspace2^{-|E|}\sum_{\emptyset\neq E'\subseteq E}\enspace\prod_{e\in E'}\Acc(e)\right].\]

By averaging, there must exist some nonempty subset $E'\subseteq E$
such that \[
\frac{\eps}{2^{k}}\leq\Exp_{\{x_{i}\},\{y_{i}\},\{z_{a}\}}\left[\thinspace\prod_{e\in E'}\Acc(e)\right].\]

Let $\Odd$ consists of the vertices in $[k]$ with odd degree in
$E'.$ Expanding out the definition of $\Acc(e),$ we can conclude

\[
\frac{\eps}{2^{k}}\leq\Exp_{\{x_{i}\},\{y_{i}\},\{z_{a}\}}\thinspace\left[\prod_{i\in\Odd}f_{i}(x_{i}+y_{i}\wedge z_{i})\cdot\prod_{e\in E'}f_{e}\left(\sum_{i\in e}x_{i}+\left(\sum_{i\in e}y_{i}\right)\wedge z_{e}\right)\right].\]
 \\

We now define a family of functions that represent the {}``noisy
versions'' of $f_{a}.$ For $a\in[k]\cup E,$ define $g_{a}':\cube^{2n}\rightarrow[-1,1]$
to be \[
g_{a}'(x;y)=\Exp_{z\in\cube^{n}}f_{a}(x+y\wedge z).\]

Thus we have \[
\frac{\eps}{2^{k}}\leq\Exp_{\{x_{i}\},\{y_{i}\}}\thinspace\left[\prod_{i\in\Odd}g'_{i}(x_{i};y_{i})\cdot\prod_{e\in E'}g'_{e}\left(\sum_{i\in e}x_{i};\sum_{i\in e}y_{i}\right)\right].\]
 Following the approach in~\cite{HasWig01,SamTre06}, we are going
to reduce the analysis of the iterated test to one hyperedge. Let
$d$ be the maximum size of an edge in $E',$ and without loss of
generality, let $(1,2,\ldots,d)$ be a maximal edge in $E'.$ Now,
fix the values of $x_{d+1},\ldots,x_{k}$ and $y_{d+1},\ldots,y_{k}$
so that the following inequality holds: \begin{equation}
\frac{\eps}{2^{k}}\leq\Exp_{x_{1},y_{1},\ldots,x_{d},y_{d}}\left[\prod_{i\in\Odd}g'_{i}(x_{i};y_{i})\cdot\prod_{e\in E'}g'_{e}\left(\sum_{i\in e}x_{i};\sum_{i\in e}y_{i}\right)\right].\label{eq:HW averaging inequality 1}\end{equation}

We group the edges in $E'$ based on their intersection with $(1,\ldots,d).$
We rewrite Inequality \ref{eq:HW averaging inequality 1} as \begin{equation}
\frac{\eps}{2^{k}}\leq\Exp_{(x_{1},y_{1}),\ldots,(x_{d},y_{d})\in\cube^{2n}}\left[\prod_{S\subseteq[d]}\enspace\prod_{a\in\Odd\cup E':a\cap[d]=S}\enspace g_{a}\left(\sum_{i\in S}x_{i};\sum_{i\in S}y_{i}\right)\right],\label{eq:HW averaging inequality 2}\end{equation}

where for each $a\in[k]\cup E,$ $g_{a}(x;y)=g'_{a}(c'_{a}+x;c_{a}+y)$,
with $c'_{a}=\sum_{i\in a\backslash[d]}\thinspace x_{i}$ and $c_{a}=\sum_{i\in a\backslash[d]}y_{i}$
fixed vectors in $\cube^{n}.$

By grouping the edges based on their intersection with $[d],$ we
can rewrite Inequality \ref{eq:HW averaging inequality 2} as \begin{eqnarray*}
\frac{\eps}{2^{k}} & \leq & \Exp_{(x_{1},y_{1}),\ldots,(x_{d},y_{d})\in\cube^{2n}}\left[\prod_{S\subseteq[d]}G_{S}\left(\sum_{i\in S}(x_{i};y_{i})\right)\right]\\
 & = & \left\langle \{G_{S}\}_{S\subseteq[d]}\right\rangle _{LU_{d}},\end{eqnarray*}
 where $G_{S}$ is simply the product of all the functions $g_{a}$
such that $a\in\Odd\cup E'$ and $a\cap[d]=S.$\\

Since $(1,\ldots,d)$ is maximal, all the other edges in $E'$ do
not contain $(1,\ldots,d)$ as a subset. Thus $G_{[d]}=g_{[d]}$ and
$\Exp G_{[d]}=0.$ By Lemma \ref{lem:ST gowers inverse}, the linear
Gowers inner product of a family of functions $\{G_{S}\}$ being positive
implies that two functions from the family must share a variable with
positive influence. More precisely, there exist $S\neq T\subseteq[d],$
$i\in[2n],$ $\tau>0,$ such that $\infl_{i}(G_{S}),\infl_{i}(G_{T})\geq\tau,$
where $\tau=\frac{\eps^{4}}{2^{O(d)}}.$

Note that $G_{\emptyset}$ is the product of all the functions $g'_{a}$
that are indexed by vertices or edges outside of $[d].$ So $G_{\emptyset}$
is a constant function, and all of its variables clearly have influence
$0.$ Thus neither $S$ nor $T$ is empty. Since $G_{S}$ and $G_{T}$
are products of at most $2^{k}$ functions, by Lemma \ref{lem: ST influence of product}
there must exist some $a\neq b\in[d]\cup E'$ such that $\infl_{i}(g_{a}),\infl_{i}(g_{b})\geq\frac{\tau}{2^{2k}}.$
Recall that we have defined $g_{a}(x;y)$ to be $\Exp_{z}f_{a}(c'_{a}+x+(c_{a}+y)\wedge z).$
Thus we can apply Proposition \ref{pro:noisy function} to obtain
\begin{eqnarray*}
\infl_{i}(g_{a}) & = & \sum_{(\alpha,\beta)\in\cube^{2n};i\in(\alpha,\beta)}\enspace\widehat{g}_{a}(\alpha;\beta)^{2}\\
 & = & \sum_{\alpha\in\cube^{n};i\in\alpha}\enspace\sum_{\beta\subseteq\alpha}\thinspace\hat{f_{a}}(\alpha)^{2}\thinspace4^{-|\alpha|}\\
 & = & \sum_{\alpha\in\cube^{n};i\in\alpha}\enspace\hat{f_{a}}(\alpha)^{2}\thinspace2^{-|\alpha|}.\end{eqnarray*}

Let $w$ be the least positive integer such that $2^{-w}\leq\frac{\tau}{2^{2k+1}}.$
Then it is easy to see that $\infl_{i}^{\leq w}(f_{a})\geq\frac{\tau}{2^{2k+1}}$.
Similarly, $\infl_{i}^{\leq w}(f_{b})\geq\frac{\tau}{2^{2k+1}}$ as
well. Hence this completes the proof. \end{proof}

\section{Acknowledgments}

I am grateful to Alex Samorodnitsky for many useful discussions and
his help with the Gowers norm. I also thank Madhu Sudan for his advice
and support and Swastik Kopparty for an encouraging discussion during
the initial stage of this research.

\bibliographystyle{plain} \bibliographystyle{plain} \bibliographystyle{plain}
\bibliography{main}

\end{document}